\definecolor {processblue}{cmyk}{0.96,0,0,0}
\newtheorem{thm}{Theorem}[section]
\theoremstyle{definition}
\newtheorem{cor}[thm]{Corollary}
\newtheorem{prop}[thm]{Proposition}
\newtheorem{lem}[thm]{Lemma}
\newtheorem{notes}[thm]{Notation}
\newtheorem{rem}[thm]{Remark}
\newtheorem{ex}[thm]{Example}
\numberwithin{equation}{section}
\begin{document}
\title[Some results on the ideal-based cozero-divisor graph of a commutative ring]
{Some results on the ideal-based cozero-divisor graph of a commutative ring}
\author{ F. Farshadifar}
\address{Department of Mathematics Education, Farhangian University, P.O. Box 14665-889, Tehran, Iran.}
\email{f.farshadifar@cfu.ac.ir}

\subjclass[2020]{05C25, 05C10, 13A99}%
\keywords {Graph, cozero-divisor, ideal-based cozero-divisor graph, connectivity, planarity}

\begin{abstract}
Let $R$ be a commutative ring with
identity and $I$ be an ideal of $R$.
The cozero-divisor graph with respect to $I$, denoted by $\Gamma''_I(R)$, is the
graph of $R$ with vertices $\{x \in R\setminus I\: |\: xR+I\not=R \}$ and two distinct vertices $x$ and $y$ are adjacent if and only if  $x \not \in yR+I$ and $y \not \in xR+I$.
In this paper, we obtained some results on $\Gamma''_I(R)$.
\end{abstract}
\maketitle
\section{Introduction}
\noindent
Throughout this paper, we denote $R$ as a commutative ring with identity and $\Bbb Z_n$ as the ring of integers modulo $n$. The set of maximal ideals and Jacobson radical of $R$ are denoted by $Max(R)$ and $J(R)$, respectively.

A \emph{graph} $G$ is defined as the pair $(V(G),E(G))$, where $V(G)$ is the set of vertices of $G$ and $E(G)$ is the set of edges of $G$. For two distinct vertices $a$ and $b$ of $V(G)$, the notation $a\sim b$ means that $a$ and $b$ are adjacent. A graph $G$ is said to be \emph{complete} if $a \sim b$ for all distinct $a, b\in V(G)$, and $G$ is said to be \emph{empty} if $E(G) =\emptyset$. Note that by this definition that a graph may be empty even if $V (G)\not =\emptyset$. An empty graph could also be described as totally disconnected. If $|V (G)|\geq 2$, a \emph{path} from $a$ to $b$ is a series
of adjacent vertices $a\sim v_1\sim v_2\sim ...\sim v_n\sim b$. The \emph{length of a path} is the number of edges it contains. A \emph{cycle} is a path that begins and ends at the same vertex in which no edge is repeated, and all vertices other than the starting and ending vertex are distinct. If a graph $G$ has a cycle, the \emph{girth} of $G$ (notated $g(G)$) is defined as the length of the shortest cycle of $G$; otherwise, $g(G) =\infty$. A graph $G$ is \emph{connected} if for every pair of distinct vertices $a, b\in V (G)$, there exists a path from $a$ to $b$. If there is a path from $a$ to $b$ with $a, b \in V (G)$, then the \emph{distance from} $a$ to $b$ is the length of the shortest path from $a$ to $b$ and is denoted by $d(a, b)$. If there is not a path between $a$ and $b$, $d(a, b) = \infty$. The \emph{diameter} of $G$ is diam$(G) = \sup\{d(a,b) \mid a, b \in V(G)\}$.
A graph $G$ is said to be \textit{planar} if it can be drawn in the plane so that its edges intersect only at their ends.
A subdivision of a graph is any graph that can be obtained from the original graph by replacing edges by paths.
By Kuratowski's Theorem, a graph $G$ is planar if and only if it does not contain a subgraph which is a subdivision of $K^5$ or $K^{3,3}$, where $K^n$ is a complete graph with $n$ vertices and $K^{m,n}$ is a complete bipartite graph, for positive integers $m, n$ \cite[p. 153]{BM76}.

Let $Z(R)$ be the set of all zero-divisors of $R$. Anderson and Livingston, in \cite{2}, introduced the \emph{zero-divisor graph of R}, denoted by $\Gamma(R)$, as the (undirected) graph with vertices $Z^*(R) = Z(R)\backslash \{0\}$ and for two distinct elements $x$ and $y$ in $Z^*(R)$, the vertices $x$ and $y$ are adjacent if and only if $xy = 0$.
In \cite{3}, Redmond introduced the definition of the zero-divisor graph with respect to an ideal. Let $I$ be an ideal of $R$. The \emph{zero-divisor graph of $R$ with respect to $I$}, denoted by $\Gamma_I(R)$, is the graph whose vertices are the set $\{ x \in R\setminus I\, |\, xy \in I\ for\ some\ y \in R \setminus I\}$ with distinct vertices $x$ and $y$ are adjacent if and only if $xy \in I$. Thus if $I = 0$, then $\Gamma_I(R) = \Gamma(R)$.

In \cite{1}, Afkhami and Khashayarmanesh introduced the \emph{cozero-divisor graph} $\Gamma'(R)$ of $R$, in which the vertices are precisely the non-zero, non-unit elements of $R$, denoted by $W^*(R)$, and two distinct vertices $x$ and $y$ are adjacent if and only if $x \not \in yR$ and $y \not \in xR$. More information about this graph can be found in \cite{1111, 2222,3333}. Let $I$ be an ideal of $R$.
The authors in \cite{00}, introduced and studied a generalization of cozero-divisor graph $\acute{\Gamma}_I(R)$ of $R$ with vertices $\{x \in R  \setminus Ann_R(I)\: |\: xI \neq I \}$ and two distinct vertices $x$ and $y$ are adjacent if and only if $x \not \in yI$ and $y \not \in xI$. In fact, $\acute{\Gamma}_I(R)$ is a generalization of cozero-divisor graph introduced in \cite{1} when $I = R$.

In \cite{FF5726}, the present author introduced and studied a generalization of cozero-divisor graph with respect to $I$, denoted by $\Gamma''_I(R)$, an undirected graph with vertices $\{x \in R\setminus I\: |\: xR+I\not=R \}$ and two distinct vertices $x$ and $y$ are adjacent if and only if  $x \not \in yR+I$ and $y \not \in xR+I$.
In fact, $\Gamma''_I(R)$ can be regarded as a dual notion of ideal-based zero-divisor graph introduced in \cite{3} and also,
$\Gamma''_I(R)$ is a generalization of cozero-divisor graph introduced in \cite{1} when $I = 0$.
The aim of this article is to obtain further results on the graph $\Gamma''_I(R)$. Similar to the cozero-divisor graph, we study the graph $\Gamma''_I(R)$ for the direct products of two commutative rings. Also, we characterize finite commutative rings $R$
such that $\Gamma''_I(R)$ is planar.
\section{$\Gamma''_I(R)$ for the direct products of two commutative rings}
\noindent
Throughout this section, all rings are commutative rings with non-zero identities.
\begin{lem}\label{2.1}
Let $R=R_1\times \cdots \times R_n$ be a direct product of commutative rings and $I=I_1\times \cdots \times I_n$ be an ideal of $R$. If $x_i$ is adjacent to $y_i$ in $\Gamma''_{I_i}(R_i)$ for some $1\leq i\leq n$, then every element in $R$ with $i$-th component $x_i$ is adjacent to all elements in $R$ with i-th component $y_i$.
\end{lem}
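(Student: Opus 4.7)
The plan is to take arbitrary elements $\mathbf{x}=(x_1,\ldots,x_n)$ and $\mathbf{y}=(y_1,\ldots,y_n)$ of $R$ whose $i$-th components are the given adjacent vertices $x_i,y_i\in\Gamma''_{I_i}(R_i)$, and then verify each defining condition for an edge in $\Gamma''_I(R)$ by projecting to the $i$-th coordinate. Distinctness of $\mathbf{x}$ and $\mathbf{y}$ is automatic from $x_i\ne y_i$.

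First I would check that both lie in $V(\Gamma''_I(R))$. Since $x_i\notin I_i$, the vector $\mathbf{x}$ cannot belong to $I=I_1\times\cdots\times I_n$, and the analogous statement holds for $\mathbf{y}$. For the second vertex condition, I would argue contrapositively: if $\mathbf{x}R+I=R$, then writing $\mathbf{1}=\mathbf{x}\mathbf{r}+\mathbf{s}$ with $\mathbf{r}\in R$ and $\mathbf{s}\in I$ and reading off the $i$-th coordinate gives $1_{R_i}=x_ir_i+s_i$, forcing $x_iR_i+I_i=R_i$ and contradicting the fact that $x_i$ is a vertex of $\Gamma''_{I_i}(R_i)$. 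The same projection argument handles $\mathbf{y}$.

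Next I would verify the two non-containment conditions. Suppose toward a contradiction that $\mathbf{x}\in\mathbf{y}R+I$, so $\mathbf{x}=\mathbf{y}\mathbf{r}+\mathbf{s}$ for some $\mathbf{r}\in R$ and $\mathbf{s}\in I$. Projecting to the $i$-th coordinate yields $x_i=y_ir_i+s_i\in y_iR_i+I_i$, contradicting adjacency of $x_i$ and $y_i$ in $\Gamma''_{I_i}(R_i)$. By the symmetric argument $\mathbf{y}\notin\mathbf{x}R+I$, so $\mathbf{x}$ and $\mathbf{y}$ are adjacent in $\Gamma''_I(R)$.

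There is no genuinely hard step here; the entire proof is a coordinatewise unwinding of the definitions, and the only subtlety is remembering to verify distinctness (immediate from $x_i\ne y_i$) and both vertex-membership conditions before addressing the adjacency relations themselves. The argument works uniformly for any choice of the remaining coordinates of $\mathbf{x}$ and $\mathbf{y}$, which is exactly the content of the lemma.
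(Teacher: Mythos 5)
Your proposal is correct and uses the same coordinatewise projection argument as the paper: assume a containment like $\mathbf{x}\in\mathbf{y}R+I$, read off the $i$-th coordinate, and contradict the adjacency of $x_i$ and $y_i$ in $\Gamma''_{I_i}(R_i)$. You are in fact slightly more careful than the paper, which skips the verification that $\mathbf{x}$ and $\mathbf{y}$ actually lie in $V(\Gamma''_I(R))$; your projection check of the two vertex conditions fills that small gap.
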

\begin{proof}
Assume that $x_i$ is adjacent to $y_i$ in $\Gamma''_{I_i}(R_i)$ for some $1\leq i\leq n$ and assume on the contrary that the vertices $(x_1,\ldots , x_n)$  and $(y_1,\ldots , y_n)$ are not adjacent in $\Gamma''_{I}(R)$. Without loss of generality, we can suppose that
$(x_1,\ldots , x_n)\in (y_1,\ldots , y_n)(R_1\times \cdots \times R_n)+I_1\times \cdots \times I_n$. Then
$(x_1,\ldots , x_n)=(y_1,\ldots , y_n)(r_1,\ldots , r_n)+(a_1,\ldots , a_n)$ for some $(r_1,\ldots , r_n)\in R_1\times \cdots \times R_n$ and $(a_1,\ldots , a_n) \in I_1\times \cdots \times I_n$. It follows that $x_i=y_ir_i+a_i$ and so $x_i$ is not adjacent to $y_i$ in $\Gamma''_{I_i}(R_i)$. This is a desired contradiction.
\end{proof}

\begin{cor}\label{2.2}
Let $R=R_1\times  R_2$ be a direct product of commutative rings and $I=I_1\times I_2$ be an ideal of $R$.  If $(x_1,y_1), (x_2, y_2)\in V(\Gamma''_{I_1}(R_1))\times V(\Gamma''_{I_2}(R_2))$ such that they are not adjacent in
$\Gamma''_{I_1\times I_2}(R_1\times R_2)$, then $x_1$ is not adjacent to $x_2$ in $\Gamma''_{I_1}(R_1)$ and
$y_1$ is not adjacent to $y_2$ in $\Gamma''_{I_2}(R_2)$.
\end{cor}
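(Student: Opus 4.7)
The plan is to view this corollary as the coordinate-wise contrapositive of Lemma \ref{2.1} in the case $n = 2$, proving each of the two claimed non-adjacencies by contradiction.

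Concretely, for the first claim, I would suppose for contradiction that $x_1$ is adjacent to $x_2$ in $\Gamma''_{I_1}(R_1)$. Applying Lemma \ref{2.1} with $n = 2$ and $i = 1$, every element of $R_1 \times R_2$ whose first coordinate equals $x_1$ is adjacent in $\Gamma''_{I_1 \times I_2}(R_1 \times R_2)$ to every element whose first coordinate equals $x_2$. In particular $(x_1, y_1)$ would be adjacent to $(x_2, y_2)$, contradicting the hypothesis. The identical argument with $i = 2$ handles the second-coordinate conclusion.

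I anticipate no real obstacle; the reasoning is a two-line application of the preceding lemma, and the two components decouple perfectly because Lemma \ref{2.1} already quantifies over \emph{all} elements with a prescribed $i$-th coordinate. The one bookkeeping point worth mentioning is that $(x_1, y_1)$ and $(x_2, y_2)$ must actually lie in $V(\Gamma''_{I_1 \times I_2}(R_1 \times R_2))$ for \emph{non-adjacency} in this graph to be a meaningful condition. This is automatic from the hypothesis: since $x_j R_1 + I_1 \neq R_1$, the corresponding product $(x_j, y_j)(R_1 \times R_2) + (I_1 \times I_2)$ cannot equal $R_1 \times R_2$, and since $x_j \notin I_1$, the pair $(x_j, y_j)$ does not lie in $I_1 \times I_2$ either. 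With that verified, the corollary follows immediately from two invocations of Lemma \ref{2.1}.
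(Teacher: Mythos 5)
Your proposal is correct and is exactly the paper's argument: the paper's proof is the single line ``This follows from Lemma \ref{2.1},'' and you have simply spelled out the contrapositive application of that lemma in each coordinate, together with the (correct) check that the pairs are genuine vertices of the product graph.
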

\begin{proof}
This follows from Lemma \ref{2.1}.
\end{proof}

\begin{lem}\label{2.3}
Let $R=R_1\times  R_2$ be a direct product of commutative rings and $I=I_1\times I_2$ be an ideal of $R$.
\begin{itemize}
\item [(a)] Let $x \in R_1$ and $y_1,y_2 \in R_2$. Then $(x,y_1)$ is adjacent to $(x,y_2)$ in $\Gamma''_{I_1\times I_2}(R_1\times R_2)$ if and only if $y_1$ is adjacent to $y_2$ in  $\Gamma''_{I_2}(R_2)$.
\item [(b)] Let $x _1, x_2 \in R_1$ and $y \in R_2$. Then $(x_1,y)$ is adjacent to $(x_2,y)$ in $\Gamma''_{I_1\times I_2}(R_1\times R_2)$ if and only if $x_1$ is adjacent to $x_2$ in  $\Gamma''_{I_1}(R_1)$.
\end{itemize}
\end{lem}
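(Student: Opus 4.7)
The plan is to prove part (a) in both directions and then note that (b) is completely symmetric (just swap the roles of $R_1$ and $R_2$, and of the first and second coordinates). The forward implication of (a) is essentially free from Lemma~\ref{2.1}: if $y_1$ and $y_2$ are adjacent in $\Gamma''_{I_2}(R_2)$, applying the lemma with $i=2$ yields that every element of $R_1\times R_2$ with second coordinate $y_1$ is adjacent in $\Gamma''_{I_1\times I_2}(R_1\times R_2)$ to every element with second coordinate $y_2$; in particular $(x,y_1)$ and $(x,y_2)$, which are distinct because $y_1\neq y_2$.

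For the converse, I would exploit the fact that the first coordinate admits the trivial factorization $x = x\cdot 1 + 0$. Concretely, suppose $(x,y_1)$ and $(x,y_2)$ are adjacent in $\Gamma''_{I_1\times I_2}(R_1\times R_2)$. If we had $y_1\in y_2R_2+I_2$, writing $y_1=y_2r_2+a_2$ would give $(x,y_1)=(x,y_2)(1,r_2)+(0,a_2)\in(x,y_2)(R_1\times R_2)+I_1\times I_2$, contradicting the adjacency. So $y_1\notin y_2R_2+I_2$, and symmetrically $y_2\notin y_1R_2+I_2$.

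The last thing to check is that $y_1,y_2$ actually lie in $V(\Gamma''_{I_2}(R_2))$. Neither is in $I_2$, because $y_1\in I_2$ would force $y_1=y_2\cdot 0+y_1\in y_2R_2+I_2$, contradicting the previous step; and neither satisfies $y_iR_2+I_2=R_2$, because $1=y_1r+a$ with $a\in I_2$ would give $y_2=y_1(y_2r)+y_2a\in y_1R_2+I_2$, again contradicting the previous step. Together with the non-containments this establishes that $y_1$ and $y_2$ are adjacent in $\Gamma''_{I_2}(R_2)$, completing (a); part (b) is handled by the identical argument with coordinates interchanged.

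The argument is largely symbolic, so the main obstacle is just the bookkeeping in the last paragraph: the non-containment conditions for adjacency fall out immediately by projecting to the second coordinate, but one must separately verify that $y_1,y_2$ qualify as vertices before concluding adjacency in $\Gamma''_{I_2}(R_2)$.
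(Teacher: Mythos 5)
Your proof is correct and follows essentially the same route as the paper: the forward direction via Lemma~\ref{2.1}, and the converse by projecting the factorizations onto the second coordinate, using $(1,r_2)$ for the non-containments and the identity $1=y r+a$ to rule out $y_iR_2+I_2=R_2$. The only cosmetic difference is that you derive the non-containment conditions first and deduce the vertex conditions from them, while the paper does the reverse; both orderings work.
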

\begin{proof}
(a) Assume that $(x,y_1)$ is adjacent to $(x,y_2)$ in $\Gamma''_{I_1\times I_2}(R_1\times R_2)$.
If $y_2 \in I_2$, then we have
$$
(x,y_2)=(x,y_1)(1,0)+(0,y_2)\in (x,y_1)(R_1\times  R_2)+I_1\times I_2,
$$
which is a contradiction. Thus $y_2\not \in I_2$. Similarly, $y_1\not \in I_2$. So, if $y_2 \not \in V(\Gamma''_{I_2}(R_2))$, then $y_2R_2+I_2=R_2$ since $y_2\not \in I_2$. Thus $1_{R_2}=r_2y_2+a_2$ for some $r_2 \in R_2$ and $a_2 \in I_2$. This implies that
$$
(x,y_1)=(x,1_{R_2}y_1)=(x, (r_2y_2+a_2)y_1)=
$$
$$
(x,y_2)(1_{R_1},r_2 y_1)+(0,a_2y_2)\in (x,y_2)(R_1\times  R_2)+I_1\times I_2,
$$
This is a contradiction.
Thus $y_2 \in V(\Gamma''_{I_2}(R_2))$. Similarly, $y_1 \in V(\Gamma''_{I_2}(R_2))$. Now if $y_1$ is not adjacent to $y_2$, then $y_1 \in y_2R_2+I_2$ or $y_2 \in y_1R_2+I_2$.  Hence, without loss of generality, we can assume that $y_1=y_2r_2+a_2$ for some $r_2 \in R_2$ and $a_2 \in I_2$. This implies that $(x,y_1)=(1_{R_1}, r_2)(x,y_2)+(0, a_2)$, which means that $(x,y_1)$ is not adjacent to $(x,y_2)$. This contradiction shows that $y_1$ is adjacent to $y_2$ in  $\Gamma''_{I_2}(R_2)$. Then converse follows from Lemma \ref{2.1}.

(b) The proof is similar to the part (a).
\end{proof}

\begin{cor}\label{2.4}
Let $R=R_1\times  R_2$ be a direct product of commutative rings and $I=I_1\times I_2$ be an ideal of $R$, where $I_1$ and $I_2$ are two proper ideals of $R_1$ and $R_2$, respectively.
If either $\Gamma''_{I_1}(R_1)$ or $\Gamma''_{I_2}(R_2)$ is not planar, then $\Gamma''_{I}(R)$ is not planar.
\end{cor}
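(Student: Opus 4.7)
The plan is to show that $\Gamma''_I(R)$ contains an induced copy of each factor graph $\Gamma''_{I_j}(R_j)$, so that non-planarity of a factor is inherited. Without loss of generality, assume $\Gamma''_{I_2}(R_2)$ is not planar; the other case is symmetric.

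First I would fix an arbitrary element $x\in R_1$ (for concreteness, $x=0_{R_1}$) and define the map
\[
\phi\colon V(\Gamma''_{I_2}(R_2))\longrightarrow R_1\times R_2,\qquad \phi(y)=(x,y).
\]
The next step is to verify that $\phi(y)$ really lands in $V(\Gamma''_{I}(R))$. Since $y\notin I_2$ we have $(x,y)\notin I_1\times I_2=I$, and since $(x,y)R+I=(xR_1+I_1)\times(yR_2+I_2)$ together with $yR_2+I_2\neq R_2$, we conclude $(x,y)R+I\neq R$. So $\phi$ is well defined and clearly injective.

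Second, I would invoke Lemma~\ref{2.3}(a): for any $y_1,y_2\in V(\Gamma''_{I_2}(R_2))$, the vertices $\phi(y_1)=(x,y_1)$ and $\phi(y_2)=(x,y_2)$ are adjacent in $\Gamma''_{I}(R)$ if and only if $y_1$ and $y_2$ are adjacent in $\Gamma''_{I_2}(R_2)$. This is precisely the statement that $\phi$ is an isomorphism of $\Gamma''_{I_2}(R_2)$ onto the induced subgraph of $\Gamma''_{I}(R)$ on the image $\phi(V(\Gamma''_{I_2}(R_2)))$.

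Finally, since $\Gamma''_{I_2}(R_2)$ is non-planar it contains a subdivision of $K^5$ or $K^{3,3}$ by Kuratowski's theorem, and this same subdivision appears as a subgraph of $\Gamma''_{I}(R)$ via $\phi$, so $\Gamma''_{I}(R)$ is also non-planar. The symmetric argument, using Lemma~\ref{2.3}(b) with a fixed $y\in R_2$, handles the case in which $\Gamma''_{I_1}(R_1)$ is non-planar. There is no real obstacle here beyond the bookkeeping in step one to confirm that $(x,y)$ is genuinely a vertex of $\Gamma''_{I}(R)$; the properness of $I_1$ and $I_2$ is used implicitly to ensure $R_1,R_2$ (and hence $\phi$'s domain/codomain considerations) behave properly, but the crucial input is Lemma~\ref{2.3}.
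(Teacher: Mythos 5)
Your proof is correct and takes essentially the same route as the paper's: fix one coordinate and use Lemma~\ref{2.3} to transfer a Kuratowski subgraph from the non-planar factor into $\Gamma''_{I}(R)$. The only difference is that you explicitly verify the image points $(x,y)$ are genuine vertices of $\Gamma''_{I}(R)$, a detail the paper leaves implicit.
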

\begin{proof}
Without loss of generality, assume that $\Gamma''_{I_1}(R_1)$ is not planar. So, by Kuratowski's Theorem, it contains a subdivision of $K^5$ or $K^{3,3}$. Now, by Lemma  \ref{2.3} (b), we conclude that $\Gamma''_{I}(R)$ is not planar.
\end{proof}

The \textit{chromatic number} of a graph $G$, denoted by $\chi(G)$, is the minimal number of colors which
can be assigned to the vertices of $G$ in such a way that every two adjacent vertices have
different colors. Also, a \textit{clique of a graph} is a complete subgraph and the number of vertices
in a largest clique of $G$, denoted by $\omega(G)$, is called the \textit{clique number} of $G$.
\begin{cor}\label{2.4}
Let $R=R_1\times  R_2$ be a direct product of commutative rings and $I=I_1\times I_2$ be an ideal of $R$. Then we have the following.
\begin{itemize}
\item [(a)] $\omega (\Gamma''_{I_1\times I_2}(R_1\times R_2))\geq Max \{\omega (\Gamma''_{I_1}(R_1)), \omega(\Gamma''_{I_2}(R_2))\}$.
\item [(b)] $\chi(\Gamma''_{I_1\times I_2}(R_1\times R_2))\geq Max \{\chi(\Gamma''_{I_1}(R_1)), \chi(\Gamma''_{I_2}(R_2))\}$.
\end{itemize}
\end{cor}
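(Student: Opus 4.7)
The plan is to show, for each $i\in\{1,2\}$, that $\Gamma''_{I_i}(R_i)$ embeds as an induced subgraph of $\Gamma''_{I_1\times I_2}(R_1\times R_2)$. Both parts of the corollary are then immediate: every clique of an induced subgraph is a clique of the ambient graph, and every proper vertex coloring of the ambient graph restricts to a proper coloring of any subgraph, so neither $\omega$ nor $\chi$ can decrease when passing to a subgraph. Taking the maximum of the two resulting inequalities yields the stated bounds in (a) and (b).

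To embed $\Gamma''_{I_1}(R_1)$, I would fix any element $y_0\in R_2$ (for instance $y_0=0_{R_2}$) and consider the map $\varphi_1(x)=(x,y_0)$. First I would verify that $\varphi_1(x)\in V(\Gamma''_{I_1\times I_2}(R_1\times R_2))$ whenever $x\in V(\Gamma''_{I_1}(R_1))$: since $x\notin I_1$ we have $(x,y_0)\notin I_1\times I_2$, and since
\[
(x,y_0)R+I=(xR_1+I_1)\times(y_0R_2+I_2)
\]
and $xR_1+I_1\neq R_1$, the right-hand side cannot equal $R_1\times R_2$. Injectivity of $\varphi_1$ is clear. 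Part (b) of Lemma \ref{2.3} says exactly that $x_1$ and $x_2$ are adjacent in $\Gamma''_{I_1}(R_1)$ if and only if $(x_1,y_0)$ and $(x_2,y_0)$ are adjacent in $\Gamma''_{I_1\times I_2}(R_1\times R_2)$, so $\varphi_1$ realizes $\Gamma''_{I_1}(R_1)$ as an induced subgraph. A symmetric argument using part (a) of Lemma \ref{2.3}, this time fixing some $x_0\in R_1$ and setting $\varphi_2(y)=(x_0,y)$, embeds $\Gamma''_{I_2}(R_2)$ as an induced subgraph as well.

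I do not anticipate any real obstacle: the only computation is the vertex-set check above, and once that is in hand Lemma \ref{2.3} does all the work of translating adjacency between the factor and the product. The proof is structurally parallel to the previous corollary on planarity, which also uses Lemma \ref{2.3}(b) to transplant a forbidden subgraph of a factor into the product; here the same embedding instead transports a clique or a coloring obstruction.
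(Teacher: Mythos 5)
Your proof is correct and follows essentially the same route as the paper, which simply cites Lemma \ref{2.3} for this corollary: you use Lemma \ref{2.3}(b) (resp.\ (a)) to realize each factor graph as an induced subgraph of the product graph and then invoke monotonicity of $\omega$ and $\chi$ under taking subgraphs. Your write-up actually supplies the vertex-membership check $(xR_1+I_1)\times(y_0R_2+I_2)\neq R_1\times R_2$ that the paper leaves implicit.
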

\begin{proof}
This follows from Lemma  \ref{2.3}.
\end{proof}

\begin{lem}\label{2.5}
Let $R=R_1\times  R_2$ be a direct product of commutative rings and $I=I_1\times I_2$ be an ideal of $R$.
Let $x \in R_1\setminus I_1$ and $y \in R_2\setminus I_2$. Then $(x,b)$ is adjacent to $(a,y)$ in $\Gamma''_{I_1\times I_2}(R_1\times R_2)$ for each $a \in I_1$ and $b \in I_2$.
\end{lem}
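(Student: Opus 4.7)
The plan is to verify two things in turn: first, that $(x,b)$ and $(a,y)$ are indeed distinct vertices of $\Gamma''_{I_1 \times I_2}(R_1 \times R_2)$; second, that neither lies in the principal ideal of the other plus $I_1 \times I_2$. The whole argument will rest on the simple observation that $I_1$ and $I_2$ must be proper ideals (since $x \notin I_1$ and $y \notin I_2$), together with the fact that a sum of elements of $I_j$ stays inside $I_j$.

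For the vertex check, $(x,b) \notin I_1 \times I_2$ because $x \notin I_1$, and $(a,y) \notin I_1 \times I_2$ because $y \notin I_2$; distinctness then follows since $(x,b) = (a,y)$ would force $x = a \in I_1$. The slightly less automatic part is that $(x,b)R + I \neq R$ and $(a,y)R + I \neq R$. For $(x,b)$, any element of $(x,b)R + I$ has second component in $bR_2 + I_2 \subseteq I_2$, and since $y \in R_2 \setminus I_2$ forces $I_2 \subsetneq R_2$, this second component can never equal $1$. The argument for $(a,y)$ is symmetric, looking at the first component and using $x \in R_1 \setminus I_1$.

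For the adjacency check, suppose toward a contradiction that $(x,b) \in (a,y)R + I$. Then $(x,b) = (a,y)(r_1,r_2) + (a_1,a_2)$ for some $r_i \in R_i$ and $a_i \in I_i$, whose first component gives $x = ar_1 + a_1 \in I_1$, contradicting $x \in R_1 \setminus I_1$. Symmetrically, if $(a,y) \in (x,b)R + I$, the second component yields $y = br_2 + a_2 \in I_2$, contradicting $y \in R_2 \setminus I_2$. Hence $(x,b)$ is adjacent to $(a,y)$.

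I do not expect any genuine obstacle here; the only place requiring a moment's care is recognizing that the hypothesis $x \notin I_1$, $y \notin I_2$ is doing double duty, once to make the elements vertices (via the properness of $I_1, I_2$) and once to block the two inclusions defining non-adjacency.
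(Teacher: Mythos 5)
Your proof is correct and takes essentially the same approach as the paper's: the paper's entire argument is your final adjacency check (and it only writes out one of the two symmetric inclusions, the one giving $x = r_1a + a_1 \in I_1$). Your extra verification that $(x,b)$ and $(a,y)$ are genuinely distinct vertices is a detail the paper leaves implicit, and you handle it correctly via the observation that $b R_2 + I_2 \subseteq I_2 \subsetneq R_2$.
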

\begin{proof}
Assume contrary that $(x,b)\in (a,y)(R_1\times  R_2)+I_1\times I_2$. Then $(x,b)=(a,y)(r_1,r_2)+(a_1,b_1)$, for $r_1 \in R_1, r_2 \in R_2$ and $a_1 \in I_1, b_1 \in I_2$. This implies that $x=r_1a+a_1 \in I_1$, which is a contradiction.
\end{proof}

\begin{rem}\label{1978}
Let $R=R_1\times  R_2$ be a direct product of commutative rings and $I=I_1\times I_2$ be an ideal of $R$, where $I_1$ and $I_2$ are two proper ideals of $R_1$ and $R_2$, respectively. Then
it is easy to see that $(x,y)$ is adjacent to $(a, b+u_2)$ and $(x,y)$ is adjacent to $(a+u_1, b)$ for each $a\in I_1$, $b \in I_2$, $u_1 \in U(R_1)$, and $u_2 \in U(R_2)$.
\end{rem}

\begin{prop}\label{1980}
Let $R=R_1\times  R_2$ be a direct product of commutative rings and $I=I_1\times I_2$ be an ideal of $R$, where $I_1$ and $I_2$ are two proper ideals of $R_1$ and $R_2$, respectively. Then we have the following.
\begin{itemize}
\item [(a)] If at least one of $\Gamma''_{I_1}(R_1)$ or $\Gamma''_{I_2}(R_2)$ is not totally disconnected, Then
$g(\Gamma''_{I}(R))=3$.
\item [(b)] If $R_1\setminus I_1\not=\Bbb Z_2\setminus \{0\}$ and $R_2\setminus I_2\not=\Bbb Z_2\setminus \{0\}$, then $g(\Gamma''_{I}(R))\leq 4$.
\end{itemize}
\end{prop}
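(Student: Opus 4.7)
For part (a) the plan is to exhibit a triangle by combining Lemma~\ref{2.3}(b), which turns an edge of one factor graph into an edge of $\Gamma''_I(R)$ between two vertices sharing the same first coordinate, with Lemma~\ref{2.5}, which produces the remaining two edges to close the triangle. Without loss of generality I will arrange that $\Gamma''_{I_1}(R_1)$ contains an edge $x_1-x_2$, the totally disconnected factor playing the symmetric role. Since $I_2$ is proper I can pick some $y\in R_2\setminus I_2$, and then $(x_1,0),(x_2,0),(0,y)$ are three distinct vertices of $\Gamma''_I(R)$: the first two qualify because $x_iR_1+I_1\neq R_1$ for $i=1,2$ (they are vertices of $\Gamma''_{I_1}(R_1)$), and the third because $0\cdot R_1+I_1=I_1\neq R_1$. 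Lemma~\ref{2.3}(b) then gives the edge $(x_1,0)-(x_2,0)$ (using the edge $x_1-x_2$ in the factor graph), while Lemma~\ref{2.5} supplies the edges $(x_i,0)-(0,y)$ for $i=1,2$. This is a 3-cycle, so $g(\Gamma''_I(R))=3$.

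For part (b) the plan is to produce a 4-cycle that alternates between vertices of the form $(x,0)$ and $(0,y)$, with every edge coming directly from Lemma~\ref{2.5}. I first have to unpack the hypothesis $R_i\setminus I_i\neq\Bbb Z_2\setminus\{0\}$ into the working statement $|R_i\setminus I_i|\geq 2$: if $R_i\setminus I_i$ contained a single element $u$, then $1\notin I_i$ (else $I_i=R_i$), so $u=1$ (the only element outside $I_i$), which forces $|I_i|=1$ and therefore $R_i=\Bbb Z_2$, $I_i=0$, contrary to hypothesis. So I pick distinct $x_1,x_2\in R_1\setminus I_1$ and distinct $y_1,y_2\in R_2\setminus I_2$. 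The tuples $(x_1,0),(0,y_1),(x_2,0),(0,y_2)$ are then four pairwise distinct vertices of $\Gamma''_I(R)$, and Lemma~\ref{2.5} immediately furnishes every edge of the cycle
\[
(x_1,0)-(0,y_1)-(x_2,0)-(0,y_2)-(x_1,0),
\]
giving $g(\Gamma''_I(R))\leq 4$.

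The main potential obstacle in both parts is purely bookkeeping: one must verify that each of the chosen tuples actually lies in $V(\Gamma''_I(R))$, i.e.\ is neither in $I_1\times I_2$ nor satisfies $(\cdot,\cdot)R+I=R$. This reduces to the proper-ideal hypothesis on $I_1$ and $I_2$, so no deeper input is required; once the vertex-status checks are in place, the substantive combinatorics are already encoded in Lemmas~\ref{2.3} and \ref{2.5}, and the two arguments assemble routinely.
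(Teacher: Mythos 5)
Your overall strategy coincides with the paper's: for (a) a triangle built from one factor-graph edge lifted via Lemma~\ref{2.3} plus two edges supplied by Lemma~\ref{2.5}, and for (b) a $4$-cycle built entirely from Lemma~\ref{2.5}. Two points of comparison are worth making.

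In (a) you write that you ``will arrange that $\Gamma''_{I_1}(R_1)$ contains an edge $x_1-x_2$,'' but the hypothesis as printed --- that at least one factor graph \emph{is} totally disconnected --- supplies no edge anywhere, and your triangle cannot be assembled without one. Read literally the hypothesis even falsifies the claim: for $R=\Bbb Z_2\times\Bbb Z_2$ and $I=0\times 0$ both factor graphs are empty (hence totally disconnected), yet $\Gamma''_I(R)$ is the single edge $(1,0)-(0,1)$ and has infinite girth. The statement is evidently a misprint for ``is \emph{not} totally disconnected,'' and the paper's own proof makes the same silent substitution, beginning by choosing adjacent $x,y$ in a factor graph. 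Under that reading your triangle $(x_1,0)-(x_2,0)-(0,y)$ is correct and is in fact cleaner than the cycle printed in the paper, $(a,1+b)-(1+a,b)-(1+a,y+b)-(a,1+b)$, whose middle edge would require $b\in I_2$ to be a vertex of $\Gamma''_{I_2}(R_2)$ and which adds $y\in R_1$ to $b\in I_2$. You should have flagged the mismatch between hypothesis and construction instead of writing around it.

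In (b) your argument is actually an improvement on the paper's. The paper's cycle $(x+a,b)-(a,1+b)-(1+a,b)-(a,y+b)$ takes non-identity $x\in V(\Gamma''_{I_1}(R_1))$ and $y\in V(\Gamma''_{I_2}(R_2))$, and such vertices need not exist under the stated hypothesis: for $R=\Bbb Z_3\times\Bbb Z_3$ and $I=0$ both factor graphs are empty, yet the conclusion holds (the graph is $C_4$). Your cycle only needs two distinct elements in each $R_i\setminus I_i$, which your unpacking of $R_i\setminus I_i\neq\Bbb Z_2\setminus\{0\}$ into $\vert R_i\setminus I_i\vert\geq 2$ correctly delivers, and your vertex-membership and distinctness checks all go through via Lemma~\ref{2.5} alone.
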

\begin{proof}
(a)
Without loss of generality,
suppose that $x, y \in V(\Gamma''_{I_2}(R_2))$ such that $x$ is adjacent to $y$.
Then by Remark \ref{1978} and Lemma \ref{2.3} (a), we have the following cycle in $\Gamma''_{I}(R)$ for some $(a, b)\in I$
$$
(a,1+b)\sim (1+a,b+x)\sim (1+a,y+b)\sim (a,1+b).
$$

(b)
Let $x \in V(\Gamma''_{I_1}(R_1))$,  $y \in V(\Gamma''_{I_2}(R_2))$ such that $x , y$ are not identity, and $(a,b) \in I$. Then the result follows from the following cycle in $\Gamma''_{I}(R)$.
$$
(x+a,b)\sim (a,1+b)\sim (1+a,b)\sim (a,y+b)\sim (x+a,b).
$$
\end{proof}

\begin{thm}\label{2.6}
Let $R=R_1\times  R_2$ be a direct product of commutative rings and $I=I_1\times I_2$ be an ideal of $R$, where $I_1$ and $I_2$ are two proper ideals of $R_1$ and $R_2$, respectively. Then the graph
 $\Gamma''_{I_1\times I_2}(R_1\times R_2)$ is connected and diam $(\Gamma''_{I_1\times I_2}(R_1\times R_2))\leq 3$.
\end{thm}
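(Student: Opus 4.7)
The plan is to exhibit two ``central'' vertices, namely $(1_{R_1}, 0)$ and $(0, 1_{R_2})$, and show that every vertex of $\Gamma''_I(R)$ is either equal or adjacent to at least one of them, while the two central vertices are themselves adjacent. This immediately forces connectedness and $\mathrm{diam}(\Gamma''_I(R)) \leq 3$.

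First I would verify that both $(1_{R_1}, 0)$ and $(0, 1_{R_2})$ lie in $V(\Gamma''_I(R))$, which is immediate from $I_1$ and $I_2$ being proper, and that they are adjacent to one another by a direct application of Lemma~\ref{2.5} (with $x = 1_{R_1}$, $b = 0$, $a = 0$, $y = 1_{R_2}$).

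Next I would partition the vertex set into three classes according to how the coordinates meet $I_1$ and $I_2$. For a vertex $(a, y')$ with $a \in I_1$ and $y' \notin I_2$, Lemma~\ref{2.5} yields $(a, y') \sim (1_{R_1}, 0)$. Symmetrically, a vertex $(x', b)$ with $x' \notin I_1$ and $b \in I_2$ is adjacent to $(0, 1_{R_2})$. For an ``off-ideal'' vertex $(x, y)$ with $x \notin I_1$ and $y \notin I_2$, the defining vertex condition $(x,y)R + I \neq R$ forces $xR_1 + I_1 \neq R_1$ or $yR_2 + I_2 \neq R_2$; a short direct check from the adjacency definition shows that in the first case $(x, y) \sim (1_{R_1}, 0)$ (using $y \notin I_2$ to block one containment and $1_{R_1} \notin xR_1 + I_1$ to block the other), and in the second case $(x, y) \sim (0, 1_{R_2})$.

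For any two vertices $u, v$, I would pick associated central vertices $p_u, p_v \in \{(1_{R_1}, 0), (0, 1_{R_2})\}$ with $u = p_u$ or $u \sim p_u$, and likewise for $v$. If $p_u = p_v$, the path $u - p_u - v$ has length at most $2$; otherwise the central edge $p_u \sim p_v$ produces a path $u - p_u - p_v - v$ of length at most $3$.

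The delicate case is an off-ideal vertex $(x, y)$ with $xR_1 + I_1 = R_1$ but $yR_2 + I_2 \neq R_2$ (or the symmetric one): it fails to be adjacent to $(1_{R_1}, 0)$ and is reachable only through $(0, 1_{R_2})$. This asymmetry is precisely what forces us to use two central vertices rather than one, and explains why the diameter bound of $3$ (rather than $2$) is the natural outcome of the argument.
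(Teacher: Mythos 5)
Your proof is correct and is essentially the same argument as the paper's: both route every pair of vertices through the adjacent pair $(1_{R_1},0)$ and $(0,1_{R_2})$, with the paper organizing this as a case analysis on whether $x_iR_1+I_1=R_1$ and $y_i\in I_2$, and you packaging it as ``these two adjacent vertices dominate the graph.'' Your formulation is arguably cleaner, and you correctly isolate the one genuinely delicate case (an off-ideal vertex with $xR_1+I_1=R_1$) that forces the bound $3$ rather than $2$.
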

\begin{proof}
Let $(x_1, y_1)$ and  $(x_2, y_2)$ be two distinct vertices of $\Gamma''_{I_1\times I_2}(R_1\times R_2)$.
We consider the following cases:

\textbf{Case 1.}
$x_1R_1+I_1\not=R_1$ and $x_2R_1+I_1\not=R_1$. If $y_1 \in I_2$ and $y_2 \in I_2$, then $(x_1,y_1)\sim (0,1)\sim (x_2,y_2)$ is a path. If $y_1 \not\in I_2$ and $y_2 \not\in I_2$, then one can see that
$(x_1,y_1)\sim (1,0)\sim (x_2,y_2)$ is a path. Now, let $y_1\not \in I_2$ and $y_2 \in I_2$. Then we can obtain the path
$(x_1,y_1)\sim (1,0)\sim (0,1)\sim (x_2,y_2)$. The similar result holds in the case that $y_1\in I_2$ and $y_2 \not\in I_2$.

\textbf{Case 2.}
$x_1R_1+I_1\not=R_1$ and $x_2R_1+I_1=R_1$. If $y_1 \not \in I_2$, then whenever $y_2 \in I_2$, we have the path
$(x_1,y_1)\sim (x_2,y_2)$. Otherwise, $y_2 \not \in I_2$. As $(x_2,y_2) \in V(\Gamma''_{I_1\times I_2}(R_1\times R_2))$ and
$x_2R_2+I_2=R_2$, we have $y_2R_2+I_2 \not=R_2$. Now, if $(0,1) \in (x_2,y_2)(R_1\times R_2)+I_1\times I_2$, then
$x_2r_1+a_1=0$ and $r_2y_2+b_1=1$ for some $(r_1,r_2) \in R_1 \times R_2$ and $(a_1,b_1) \in I_1 \times I_2$. Thus
 $y_2R_2+I_2 =R_2$, a contradiction. Therefore, $(0,1) \not\in (x_2,y_2)(R_1\times R_2)+I_1\times I_2$. Moreover, as $x_2 \not \in I_1$, one can see that $(x_2,y_2) \not \in (0,1)(R_1\times R_2)+I_1\times I_2$. Thus $(x_1,y_1)\sim (1,0)\sim (0,1)\sim (x_2,y_2)$ is a path. Also,
 if $y_1 \in I_2$, then we have the path $(x_1,y_1)\sim (0,1)\sim (x_2,y_2)$. The similar result holds if $x_1R_1+I_1=R_1$ and $x_2R_1+I_1\not=R_1$.

\textbf{Case 3.}
$x_1R_1+I_1=R_1$ and $x_2R_1+I_1=R_1$. Then we have that $y_1R_2+I_2\not=R_2$ and $y_2R_2+I_2\not=R_2$. Thus we can apply Case 1 on the second component of ordered pairs.
Now, in view of the above cases, one can see that diam $(\Gamma''_{I_1\times I_2}(R_1\times R_2))\leq 3$.
\end{proof}

\begin{cor}\label{1407} (\cite[Theorem 4.10]{3333}).
Let $R=R_1\times  R_2$ be a direct product of commutative rings. Then the graph
 $\Gamma'(R_1\times R_2)$ is connected and diam $(\Gamma'(R_1\times R_2))\leq 3$.
\end{cor}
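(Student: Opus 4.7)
The plan is to derive Corollary \ref{1407} as a direct specialization of Theorem \ref{2.6} to the case of the zero ideal. As noted in the introduction, the ideal-based cozero-divisor graph $\Gamma''_I(R)$ was designed precisely as a generalization of the cozero-divisor graph $\Gamma'(R)$, recovering it when $I=0$. So my first step will be to unpack the definitions and verify this identification cleanly.

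Concretely, I would first observe that with $I_1=(0)$ in $R_1$ and $I_2=(0)$ in $R_2$, the vertex condition of $\Gamma''_{I_i}(R_i)$ becomes $\{x\in R_i\setminus\{0\}\colon xR_i\neq R_i\}$, which is exactly $W^*(R_i)$, the set of nonzero nonunits of $R_i$; and the adjacency condition $x\notin yR_i+(0)$ and $y\notin xR_i+(0)$ is exactly the adjacency condition of $\Gamma'(R_i)$. Thus $\Gamma'(R_i)=\Gamma''_{(0)}(R_i)$, and analogously $\Gamma'(R_1\times R_2)=\Gamma''_{(0)\times(0)}(R_1\times R_2)$.

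Next, since the rings considered throughout this section have nonzero identity, the zero ideals $(0)\subset R_1$ and $(0)\subset R_2$ are proper, so the hypothesis of Theorem \ref{2.6} is satisfied with $I_1=(0)$ and $I_2=(0)$. Applying Theorem \ref{2.6} to these choices gives that $\Gamma''_{(0)\times(0)}(R_1\times R_2)$ is connected and of diameter at most $3$, and translating back through the identification above yields the claim about $\Gamma'(R_1\times R_2)$.

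There is no real obstacle here; the only thing one has to be slightly careful about is the translation between the two vertex sets (checking that ``$xR+(0)\neq R$'' means exactly ``$x$ is a nonunit''), which is immediate from the definition of unit. So the proof is essentially a one-line invocation of Theorem \ref{2.6} preceded by the observation that $\Gamma'(\cdot)=\Gamma''_{(0)}(\cdot)$.
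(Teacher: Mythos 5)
Your proposal is correct and matches the paper's proof exactly: the paper likewise obtains this corollary by setting $I=0\times 0$ in Theorem \ref{2.6}, relying on the identification $\Gamma'(\cdot)=\Gamma''_{(0)}(\cdot)$. Your additional verification that the zero ideals are proper and that the vertex and adjacency conditions translate correctly is a careful (and welcome) expansion of the same one-line argument.
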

\begin{proof}
It is enough to set $I=0\times 0$ in Theorem \ref{2.6}.
\end{proof}

\begin{ex}
Let $R = \Bbb Z_3 \times \Bbb Z_3$ and $I =\{(0, 0)\}$.
Then $V(\Gamma''_I(R)=\{(1, 0), (2, 0), (0, 1), (0, 2)\}$ and the following figure show this graph.
\begin{figure}[H]
				\begin{center}
				\begin{tikzpicture}	[scale=1.5]		
										\draw [fill=black] (0,1) circle (0.06);										
						\draw [fill=black] (-1,2) circle (0.06);
						\draw [fill=black] (0,2) circle (0.06);
						\draw [fill=black] (-1,1) circle (0.06);														
						\draw (0,1) -- (-0,2);
						\draw (-1,1) -- (-1,2);
	\draw (-0,2) -- (-1,2);	
			\draw (0,1) -- (-1,1);
									
						\node at (0,0.7) {$(2, 0)$};
						\node at (-1.2,2.2) {$(1,0)$};
	     				\node at (0.1,2.2) {$(0, 2)$};	
						\node at (-1.1,0.7) {$(0, 1)$};
			\end{tikzpicture}
			\end{center}
		\end{figure}
	\end{ex}
\section{Planarity of the graph $\Gamma''_I(R)$}
\begin{prop}\label{1357}
$\Gamma'(R)$ is a planar graph if and only if $\Gamma''_I(R)$ is a planar graph for each ideal $I$ of $R$.
\end{prop}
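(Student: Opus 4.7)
The plan is to show that $\Gamma''_I(R)$ is always a subgraph of $\Gamma'(R)$, which gives the harder direction immediately, and then to note that the easy direction comes from the fact that $\Gamma'(R) = \Gamma''_0(R)$.

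For the $(\Leftarrow)$ direction, I would simply observe that $0$ is an ideal of $R$, and directly from the definition, $V(\Gamma''_0(R)) = \{x \in R \setminus \{0\} : xR \neq R\} = W^*(R)$ with the same adjacency relation as $\Gamma'(R)$. So this direction is a tautology given the hypothesis.

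For the $(\Rightarrow)$ direction, the key claim is: for any ideal $I$ of $R$, the graph $\Gamma''_I(R)$ is a subgraph of $\Gamma'(R)$. To verify the vertex inclusion, pick any $x \in V(\Gamma''_I(R))$. Then $x \notin I$ forces $x \neq 0$ (since $0 \in I$), and the condition $xR + I \neq R$ forces $x$ to be a non-unit (otherwise $xR = R$, hence $xR + I = R$). Thus $x \in W^*(R) = V(\Gamma'(R))$. For edge inclusion, if $x$ is adjacent to $y$ in $\Gamma''_I(R)$, then $x \notin yR + I$ and $y \notin xR + I$; since $yR \subseteq yR + I$ and $xR \subseteq xR + I$, this gives $x \notin yR$ and $y \notin xR$, so $x$ is adjacent to $y$ in $\Gamma'(R)$.

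Once the subgraph claim is established, planarity of $\Gamma'(R)$ transfers to $\Gamma''_I(R)$ automatically, because any subgraph of a planar graph is planar (equivalently, a subgraph cannot contain a subdivision of $K^5$ or $K^{3,3}$ unless the ambient graph does, by Kuratowski's Theorem cited in the introduction). There is no real obstacle here: the whole argument is a definition-chasing exercise, with the only tiny care-point being to note explicitly that the vertex set of $\Gamma''_I(R)$ consists of non-zero non-units and therefore sits inside $W^*(R)$.
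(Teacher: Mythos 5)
Your proof is correct and follows essentially the same route as the paper: the paper handles the forward direction by citing the fact that $\Gamma''_I(R)$ is a subgraph of $\Gamma'(R)$ (from an earlier paper of the author), which you instead verify directly by the same definition-chasing, and the reverse direction by setting $I=0$ exactly as you do. No issues.
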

\begin{proof}
If $\Gamma''_I(R)$ is a planar graph for each ideal $I$ of $R$, then by setting $I=0$ we have $\Gamma'(R)=\Gamma''_{0}(R)$ is a planar graph. If $\Gamma'(R)$ is a planar graph, then the result follows from the fact that $\Gamma''_I(R)$ is a subgraph of $\Gamma'(R)$ for each ideal $I$ of $R$ by \cite[Corollary 2.14]{FF5726}.
\end{proof}

\begin{thm}\label{2.8}
Let $I$ be an ideal of $R$. If $\Gamma''_I(R)$ is planar, then $\vert I\vert \leq 2$ or $\vert V (\Gamma'(R/I))\vert \leq  1$.
\end{thm}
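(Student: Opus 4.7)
The plan is to prove the contrapositive: assuming $|I|\ge 3$ and $|V(\Gamma'(R/I))|\ge 2$, exhibit a $K^{3,3}$ subgraph inside $\Gamma''_I(R)$, so that planarity fails by Kuratowski's theorem.

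The preliminary fact I would verify first is that adjacency in $\Gamma''_I(R)$ depends only on cosets modulo $I$. For any $a \in I$ and $x \in R$ one has $aR \subseteq I$, so $(x+a)R + I = xR + I$. Two consequences follow: $x + a \in V(\Gamma''_I(R))$ if and only if $x \in V(\Gamma''_I(R))$, and, for $a_1, a_2 \in I$ and vertices $x,y$ with $\bar{x} \ne \bar{y}$ in $R/I$, the vertices $x + a_1$ and $y + a_2$ are adjacent in $\Gamma''_I(R)$ precisely when $\bar{x}$ and $\bar{y}$ are adjacent in $\Gamma'(R/I)$.

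Next, I would select adjacent vertices $\bar{x} - \bar{y}$ in $\Gamma'(R/I)$ and fix lifts $x, y \in R$, which automatically lie in $V(\Gamma''_I(R))$. The sets $X = \{x + a : a \in I\}$ and $Y = \{y + a : a \in I\}$ consist of $|I|$ distinct vertices each and are disjoint because $\bar{x} \ne \bar{y}$. By the preliminary fact, every vertex in $X$ is adjacent to every vertex in $Y$. Since $|I|\ge 3$, this produces a $K^{3,3}$ subgraph of $\Gamma''_I(R)$, contradicting planarity.

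The main obstacle is bridging the gap between the hypothesis $|V(\Gamma'(R/I))|\ge 2$ and the existence of an actual edge in $\Gamma'(R/I)$: two vertices need not be adjacent there. I expect the author either to strengthen the selection step by exploiting the structure of $R/I$ to force an adjacent pair, or to combine $I$-translates with unit translates in the spirit of Remark \ref{1978} to piece together a $K^{3,3}$ (or a subdivision of it) even when the two vertices of $\Gamma'(R/I)$ are themselves non-adjacent. Handling this non-adjacent case cleanly is where the real work of the argument lies.
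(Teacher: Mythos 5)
Your reduction to cosets is correct, and your construction is exactly the one the paper uses: it picks adjacent vertices $x+I,\ y+I$ of $\Gamma'(R/I)$, takes distinct elements $0,i,j\in I$, and observes that $\{x,x+i,x+j\}\cup\{y,y+i,y+j\}$ spans a $K^{3,3}$ in $\Gamma''_I(R)$. However, the obstacle you flag at the end is not resolved by the paper either: the proof simply asserts that $\vert V(\Gamma'(R/I))\vert\geq 2$ produces two \emph{adjacent} vertices of $\Gamma'(R/I)$, with no justification. That inference is the analogue of the connectedness of the zero-divisor graph (which is what makes Redmond's version of this theorem work), and it fails for cozero-divisor graphs: $\Gamma'(\Bbb Z_8)$ has the three vertices $2,4,6$ and no edges at all, since $4,6\in 2\Bbb Z_8$ and $4=6\cdot 2\in 6\Bbb Z_8$.

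Moreover, the gap cannot be patched, because the statement itself is false. Take $R=\Bbb Z_{24}$ and $I=8\Bbb Z_{24}=\{0,8,16\}$, so that $\vert I\vert=3$ and $R/I\cong \Bbb Z_8$, giving $\vert V(\Gamma'(R/I))\vert=3\geq 2$. The vertices of $\Gamma''_I(R)$ are the even residues outside $I$; for each such $x$ one computes $xR+I=2\Bbb Z_{24}$ (when $\gcd(x,8)=2$) or $xR+I=4\Bbb Z_{24}$ (when $x\in\{4,12,20\}$), and in every pair of vertices at least one lies in the other's $yR+I$. Hence $\Gamma''_I(R)$ is totally disconnected and trivially planar, yet neither $\vert I\vert\leq 2$ nor $\vert V(\Gamma'(R/I))\vert\leq 1$ holds. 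What your argument (and the paper's) actually establishes is the weaker statement: if $\Gamma''_I(R)$ is planar, then $\vert I\vert\leq 2$ or $\Gamma'(R/I)$ has no edges. Your instinct that "handling the non-adjacent case is where the real work lies" was exactly right; the honest answer is that in that case the conclusion simply fails.
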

\begin{proof}
Assume contrary that  $\vert I\vert \geq 3$ and $\vert V (\Gamma'(R/I))\vert \geq  2$. Then there are
distinct adjacent vertices $x + I, y + I \in \Gamma'(R/I)$. As  $\vert I\vert \geq 3$, there are distinct
elements $0, i, j$ of $I$.
If $x$ is not adjacent to $y+i$, then we have $x \in R(y+i)+I$ or $y+i \in Rx+I$. Thus $x=ry+ri+a$ for some $r\in R$ and $a \in I$ or $y+i=tx+b$ for some $t \in R$ and $b \in I$. Therefore, $x \in Ry+I$ or $y \in Rx+I$. So we get that  $x + I$ is not adjacent to $y + I$ in  $\Gamma'(R/I)$, which is a contradiction. Thus $x$ is adjacent to $y+i$. Similarly, one can see that every vertex in $\{x, x + i, x + j\}$ is adjacent to every vertex in $\{y, y + i, y + j\}$. Therefore, the subgraph induced by $\{x, y, x + i, y +i, x + j, y + j\} = \{x, x + i, x + j\}\cup \{y, y + i, y + j\}$ is a bipartite graph.
 Note that the subgraph of $\Gamma''_I(R)$ generated by $\{x, y, x + i, y +i, x + j, y + j\} = \{x, x + i, x + j\}\cup \{y, y + i, y + j\}$ contains a subgraph isomorphic
to $K^{3,3}$. Therefore, $\Gamma''_I(R)$ is nonplanar by Kuratowski's Theorem.
\end{proof}

A \textit{pendant vertex} in a graph $G$ is a vertex $v$ that is adjacent to exactly one other vertex in $G$.

\begin{prop}\label{2.9}
Let $I$ be an ideal of $R$ such that $I\subseteq J(R)$ and $\vert V(\Gamma''_I(R)) \setminus J(R)\vert\geq 2$. Then the graph $\Gamma''_I(R)\setminus J(R)$ has no pendant vertex.
\end{prop}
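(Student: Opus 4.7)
The plan is to fix an arbitrary vertex $x \in V(\Gamma''_I(R)) \setminus J(R)$ and argue that its degree in the induced subgraph $\Gamma''_I(R) \setminus J(R)$ cannot equal $1$. If $x$ has no neighbor in this subgraph, it is not an end; otherwise, pick any neighbor $y \in V(\Gamma''_I(R))\setminus J(R)$ of $x$ and produce a second neighbor $z$ distinct from both $x$ and $y$.

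The candidate neighbor is $z := y + i$ for a nonzero $i \in I$. I would verify the following in sequence. First, $z \in V(\Gamma''_I(R))$: clearly $z \notin I$ (else $y = z - i \in I$), and since $iR \subseteq I$ one computes $zR + I = yR + I$, which is proper in $R$ because $y$ is a vertex. Second, $z \notin J(R)$, because $y \notin J(R)$ while $i \in I \subseteq J(R)$. Third, $x$ and $z$ are adjacent in $\Gamma''_I(R)$: the condition $x \notin zR + I$ is equivalent to $x \notin yR + I$ thanks to $zR+I = yR+I$, and the condition $z \notin xR + I$ is equivalent to $y \notin xR + I$ because $z - y = i \in I$; both hold by the adjacency of $x$ and $y$. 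Finally, $z \notin \{x, y\}$: automatically $z \neq y$ since $i \neq 0$, and $z \neq x$, for otherwise $x - y = i \in I$ gives $x = y\cdot 1 + i \in yR + I$, contradicting adjacency of $x$ and $y$. Consequently $z$ is a genuine second neighbor of $x$ in the induced subgraph.

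The main obstacle is that this construction depends on the existence of a nonzero element $i \in I$, i.e., on $|I| \ge 2$. The residual case is $I = 0$, in which $\Gamma''_I(R) = \Gamma'(R)$; here adding elements of $I$ is not available, and one would instead exploit either multiplication of $y$ by a nontrivial unit $u$ (so that $yuR = yR$ and adjacency with $x$ is preserved) or a maximal ideal not containing $y$ to manufacture another non-unit outside $J(R)$ that is adjacent to $x$, using the hypothesis $|V(\Gamma'(R)) \setminus J(R)| \ge 2$ to ensure the ambient graph is large enough. Checking that such an element is genuinely distinct from both $x$ and $y$ and remains outside $J(R)$ is where I expect the real friction to be; by contrast, the $|I| \ge 2$ case above is essentially forced by the structure of $\Gamma''_I(R)$.
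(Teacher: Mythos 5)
Your main argument coincides with the paper's: the paper takes a neighbour $y$ of $x$ (obtained there from connectivity of $\Gamma''_I(R)\setminus J(R)$, though your observation that a neighbourless vertex is not an end works just as well), picks $0\neq a\in I$, and checks that $y+a$ is a vertex outside $J(R)$, adjacent to $x$, and distinct from $x$ and $y$ --- the same verification you carry out via $zR+I=yR+I$. The residual case $I=0$ that you flag as unfinished is not handled by the paper either (its proof likewise begins by choosing $0\neq a\in I$), so the proposition is implicitly read with $I\neq 0$ and you have not missed any idea that the paper supplies.
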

\begin{proof}
Let $x \in V(\Gamma''_I(R)) \setminus J(R)$. By \cite[Theorem 2.8]{FF5726}, the graph $\Gamma''_I(R)\setminus J(R)$ is connected.
Thus there exists $y \in V(\Gamma''_I(R)) \setminus J(R)$  such that $x$ is adjacent to $y$.
If for some $i \in I$ we have $x=y+i$, then $x \in Ry+I$, which is a contradiction. Thus for each $i \in I$ we have $x\not=y+i$.
Let $0\not=a \in I$. Then $y+a \in V(\Gamma''_I(R))$. If $y+a\in J(R)$, then $y \in J(R)$ because $a \in I \subseteq J(R)$. This contradiction shows that $y+a \in V(\Gamma''_I(R))\setminus J(R)$. Clearly, $y+a$ is adjacent to $x$.
Therefore, $x$ is not a pendant vertex in $\Gamma''_I(R)\setminus J(R)$, as needed.
\end{proof}

\begin{notes}\label{1396}
Let $R$ be a finite commutative ring with identity. Then we can assume that $R$ is isomorphic to
the ring $R_1 \times \cdots \times R_n$, where $R_i$ is commutative local ring with
identity for every $i = 1, 2\ldots , n$.
\end{notes}

\begin{prop}\label{2.10}
Let $R$ be a ring as in Notation \ref{1396}, $n \geq 4$ and $I=I_1\times \cdots \times I_n$, where $I_i$ is a proper ideal of $R_i$  for every $i = 1, 2\ldots , n$. Then $\Gamma''_I(R)$ is not planar.
\end{prop}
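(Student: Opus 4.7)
The plan is to exhibit a subgraph isomorphic to $K^{3,3}$ in $\Gamma''_I(R)$ and then invoke Kuratowski's Theorem. Since each $R_i$ is local with $I_i$ a proper ideal, we have $I_i\subseteq\mathfrak{m}_i$, so the units of $R_i$ are exactly $R_i\setminus \mathfrak{m}_i$. In particular $1_{R_i}$ is a unit and $0\in I_i$ is a non-unit. The key observation is that the ``support idempotents'' in the product capture enough of the combinatorics of the Boolean lattice to guarantee a $K^{3,3}$ as soon as $n\ge 4$.

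For each subset $\emptyset\neq S\subsetneq\{1,\ldots,n\}$, let $e_S\in R$ be the element whose $i$th coordinate is $1_{R_i}$ if $i\in S$ and $0$ if $i\notin S$. First I would check that $e_S\in V(\Gamma''_I(R))$: it does not lie in $I$ (some coordinate is $1_{R_j}\notin I_j$), and $e_SR+I\neq R$ because for any $i\notin S$ the $i$th coordinate is $0$, which does not generate $R_i$ modulo the proper ideal $I_i$ of the local ring $R_i$. Next I would identify the adjacency relation on these vertices. Working componentwise, $e_T\in e_SR+I$ if and only if for every $i\notin S$ we have $(e_T)_i\in I_i$; since $(e_T)_i\in\{0,1_{R_i}\}$ and $1_{R_i}\notin I_i$, this is equivalent to $T\subseteq S$. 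Consequently, $e_S$ and $e_T$ are adjacent in $\Gamma''_I(R)$ if and only if the sets $S$ and $T$ are incomparable under inclusion.

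With this translation in hand, for $n\ge 4$ I would take
\[
A_1=\{1,2\},\ A_2=\{2,3\},\ A_3=\{1,3\}\quad\text{and}\quad B_1=\{1,4\},\ B_2=\{2,4\},\ B_3=\{3,4\},
\]
all viewed as proper nonempty subsets of $\{1,\ldots,n\}$. Each $A_k$ contains an element not in any $B_\ell$ (namely, an element of $\{1,2,3\}\setminus B_\ell$), and each $B_\ell$ contains $4\notin A_k$, so every pair $(A_k,B_\ell)$ is incomparable. Therefore $\{e_{A_1},e_{A_2},e_{A_3}\}\cup\{e_{B_1},e_{B_2},e_{B_3}\}$ induces a copy of $K^{3,3}$ in $\Gamma''_I(R)$, and Kuratowski's Theorem yields that $\Gamma''_I(R)$ is not planar.

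The main work is really the adjacency characterization in step two; once it is in place, the $K^{3,3}$ on the $6$ listed supports is immediate from elementary set comparison, and the hypothesis $n\ge 4$ enters solely by providing a fourth coordinate that lets the $B_\ell$'s escape the containment structure of the $A_k$'s. I do not anticipate a subtle obstacle, only bookkeeping to make sure the conditions ``$e_S\notin I$'' and ``$e_SR+I\neq R$'' are recorded carefully and that the locality of each $R_i$ is used in the right place.
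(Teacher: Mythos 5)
Your proof is correct and follows essentially the same strategy as the paper: exhibit an explicit $K^{3,3}$ among vertices whose coordinates are $0$ or $1$ and invoke Kuratowski's Theorem (the paper uses six such tuples shifted by an element of $I$, while you organize the same idea via incomparability of support sets). The adjacency criterion $e_S - e_T$ iff $S$ and $T$ are incomparable is verified correctly, so no gap.
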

\begin{proof}
Let $(a_1,a_2,a_3, a_4)\in I=I_1\times I_2\times I_3 \times I_4$.
The vertices of the set $\{(a_1, 1+a_2, 1+a_3, a_4), (a_1, 1+a_2, a_3, a_4), (1+a_1, 1+a_2, a_3, a_4)\}$ are adjacent to the
vertices of the set $\{(1+a_1, a_2, 1+a_3, 1+a_4), (1+a_1, a_2, a_3, 1+a_4), (1+a_1, a_2, 1+a_3, a_4)\}$, and so $K^{3,3}$ is a subgraph of $\Gamma''_I(R)$. Hence  $\Gamma''_I(R)$ is not planar by Kuratowski's Theorem.
\end{proof}

\begin{prop}\label{2.11}
Let $R$ be a ring as in Notation \ref{1396}, $n = 3$, and $I=I_1\times I_2\times I_3$, where $I_i$ is a proper ideal of $R_i$  for every $i = 1, 2,3$ . If there exists $1 \leq i \leq 3$, such that $R_i$ has at least three elements, then $\Gamma''_I(R)$ is not planar.
\end{prop}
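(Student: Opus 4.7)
The plan is to exhibit an explicit subdivision of $K^{3,3}$ inside $\Gamma''_I(R)$ and invoke Kuratowski's theorem. Without loss of generality assume $|R_1|\ge 3$. Since $R_1$ is a finite local ring with maximal ideal $\mathfrak{m}_1$, the count $|U(R_1)|=|\mathfrak{m}_1|\bigl(|R_1/\mathfrak{m}_1|-1\bigr)$ combined with $|R_1|\ge 3$ forces $|U(R_1)|\ge 2$, so I fix some $u\in U(R_1)$ with $u\ne 1$. I also fix any base point $(x_1,x_2,x_3)\in I$ (for instance $(0,0,0)$); writing the construction as translates by this base point makes it work for arbitrary proper ideals $I_1,I_2,I_3$.

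The six branch vertices of the subdivision are
\[
V_1=(1+x_1,\,x_2,\,x_3),\quad V_2=(u+x_1,\,x_2,\,x_3),\quad V_3=(1+x_1,\,1+x_2,\,x_3),
\]
\[
W_1=(x_1,\,1+x_2,\,x_3),\quad W_2=(x_1,\,x_2,\,1+x_3),\quad W_3=(x_1,\,1+x_2,\,1+x_3),
\]
together with one internal vertex $T=(1+x_1,\,x_2,\,1+x_3)$. Each of these seven elements has a unit coordinate (so it lies outside $I$) and an $I_i$-coordinate (so its principal ideal plus $I$ is proper), hence all seven are vertices of $\Gamma''_I(R)$; pairwise distinctness follows from $u\ne 1$ and $1\ne 0$ in each $R_i$.

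For adjacencies I rely on the slogan: if $v_i\in U(R_i)$ and $w_i\in I_i$, then $v_i\notin w_iR_i+I_i$, since otherwise $1\in I_i$ would contradict the properness of $I_i$. Thus $v$ and $w$ are adjacent in $\Gamma''_I(R)$ whenever one finds an index $i$ with $v_i$ a unit and $w_i\in I_i$ together with another index $j$ playing the reversed roles. Running this check pair by pair gives direct edges for the six products $V_iW_j$ with $i\in\{1,2\}$, and for $V_3W_2$, $V_3W_3$, $V_3T$, and $TW_1$. The single pair $V_3W_1$ is not an edge (their unit-coordinate patterns satisfy $\{1,2\}\supset\{2\}$, forcing $W_1\in V_3R+I$), and I realize it instead as the length-two path $V_3-T-W_1$. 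Since $T$ appears in only this one subdivision-path, the result is a genuine subdivision of $K^{3,3}$ on the bipartition $\{V_1,V_2,V_3\}\sqcup\{W_1,W_2,W_3\}$, and Kuratowski's theorem forces $\Gamma''_I(R)$ to be non-planar. The main obstacle is the bookkeeping: arranging the seven vertices so that exactly one $K^{3,3}$-edge fails to be direct and so that $T$ neither coincides with a branch vertex nor shortcuts another path. The hypothesis that some $R_i$ has at least three elements is used in exactly one place, namely to produce the unit $u\ne 1$ and hence the second $UII$-type vertex $V_2$; without it one is left with a planar $K^{2,3}$-like configuration.
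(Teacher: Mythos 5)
Your proof is correct and follows essentially the same strategy as the paper: both exhibit seven explicit vertices (translates by an element of $I$ of tuples built from $0$, $1$, and a third element of $R_1$) forming a subdivision of $K^{3,3}$ with exactly one subdivided edge, and then invoke Kuratowski's theorem. The only difference is that you take the third element to be a unit $u\ne 1$ (which exists by your counting argument), whereas the paper uses an arbitrary $x\in R_1\setminus\{0,1\}$; your choice makes the unit/ideal coordinate bookkeeping for the adjacency checks somewhat cleaner.
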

\begin{proof}
Let $(a_1,a_2,a_3)\in I_1\times I_2\times I_3$.
 Without loss of generality, suppose that $R_1$ has at least three elements.
Then there exists a subdivision of $K^{3,3}$ in $\Gamma''_I(R)$ as in following figure, where $x \in  R_1\setminus \{0, 1\}$.
So, by Kuratowski’s Theorem, $\Gamma''_I(R)$ is not planar.
\begin{figure}[H]
						\begin{center}
				\begin{tikzpicture}	[scale=1.6]		
					\begin{scope}[shift={(-1,0)}]
						\draw [fill=black] (0,0) circle (0.05);										
						\draw [fill=black] (1,1) circle (0.05);
						\draw [fill=black] (-1,2) circle (0.05);
						\draw [fill=black] (0,2) circle (0.05);
						\draw [fill=black] (1,2) circle (0.05);
						\draw [fill=black] (1,0) circle (0.05);
						\draw [fill=black] (-1,0) circle (0.05);
									
						\draw (1,1) -- (1,0) ;
						\draw (0,0) -- (1,2) ;
						\draw (1,2) -- (1,1) ;
						\draw (1,2) -- (-1,0) ;
						\draw (0,0) -- (-1,2) ;
						\draw (0,2) -- (1,0) ;
						\draw (1,0) -- (0,2) ;
						\draw (0,0) -- (-0,2) ;
						\draw (-1,0) -- (-1,2) ;
						\draw (-1,2) -- (1,0) ;
					
						\node at (0,-0.3) {\tiny $(1+a_1, a_2,1+a_3 )$};
						\node at (-1.8,2) {\tiny $(x+a_1,1+a_2,a_3)$};
						\node at (1.6,1) {\tiny $(a_1+1, a_2,a_3)$};
						\node at (1.8,2) {\tiny $(a_1, 1+a_2,1+a_3)$};
						\node at (0.1,2.2) {\tiny $(a_1,1+a_2,a_3)$};	
						\node at (1.6,-0.3) {\tiny $(a_1, a_2,1+a_3)$};
						\node at (-1.6,-0.3) {\tiny $(x+a_1, a_2,1+a_3)$};
					\end{scope}						
				\end{tikzpicture}
						\end{center}
		\end{figure}
\end{proof}

\begin{cor}\label{2}
Let $R$ be a ring as in Notation \ref{1396}, $I$ be an ideal of $R$, and $n=3$. Then $\Gamma''_I(R)$ is a planar graph if and only if $R\cong \Bbb Z_2 \times \Bbb Z_2 \times\Bbb Z_2$.
\end{cor}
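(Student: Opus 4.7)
The plan is to deduce both directions from results already established in the paper, namely Propositions \ref{1357} and \ref{2.11}. Write $R=R_1\times R_2\times R_3$ with each $R_i$ a commutative local ring with $1\ne 0$ (Notation \ref{1396}), and correspondingly $I=I_1\times I_2\times I_3$; the interesting setting (and the one covered by Proposition \ref{2.11}) is when each $I_i$ is a proper ideal.

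For the implication $R\cong \Bbb Z_2\times \Bbb Z_2\times \Bbb Z_2\;\Rightarrow\;\Gamma''_I(R)$ is planar, by Proposition \ref{1357} it suffices to verify that $\Gamma'(\Bbb Z_2^3)$ is planar, since $\Gamma''_I(R)$ is then a subgraph of a planar graph. I will identify $\Gamma'(\Bbb Z_2^3)$ explicitly. Label its six non-zero, non-unit vertices by their supports in $\{1,2,3\}$; since multiplication is componentwise, $y\in xR$ if and only if $\mathrm{supp}(y)\subseteq\mathrm{supp}(x)$, so adjacency means neither support contains the other. A routine enumeration shows that the three singletons form a triangle, the three doubletons form a triangle, and each singleton $\{i\}$ is joined to its complement $\{1,2,3\}\setminus\{i\}$. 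This is precisely the triangular prism, which has an obvious planar embedding as two concentric triangles with three matching edges between them.

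For the converse, assume $\Gamma''_I(R)$ is planar. By the contrapositive of Proposition \ref{2.11}, no $R_i$ can have three or more elements, so $|R_i|\le 2$ for each $i$. Since each $R_i$ is a ring with $1\ne 0$, we must have $|R_i|=2$, and the only commutative ring of order $2$ is $\Bbb Z_2$. Hence $R\cong \Bbb Z_2\times \Bbb Z_2\times \Bbb Z_2$, as desired.

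The only step that requires genuine work is the planarity check for $\Gamma'(\Bbb Z_2^3)$; once the graph is recognized as the triangular prism, no further argument is needed. Everything else is an immediate consequence of Propositions \ref{1357} and \ref{2.11}.
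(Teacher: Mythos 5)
Your proof is correct and follows essentially the same route as the paper's: Proposition \ref{1357} for the ``if'' direction and Proposition \ref{2.11} (in contrapositive form) for the ``only if'' direction. The only difference is that you verify directly that $\Gamma'(\Bbb Z_2\times\Bbb Z_2\times\Bbb Z_2)$ is the triangular prism and hence planar, where the paper instead cites Corollary 2.3 of \cite{2222}; your remark that the argument really requires each $I_i$ to be a proper ideal of $R_i$ (otherwise $\Gamma''_I(R)$ can be empty and the equivalence fails) is a caveat the paper's statement leaves implicit as well.
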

\begin{proof}
This follows from Proposition \ref{1357},  \cite[Corollary 2.3]{2222}, and Proposition \ref{2.11}.
\end{proof}

Let $R$ be a ring as in Notation \ref{1396}. In the following, we assume that $n = 2$ and $I=I_1 \times I_2$, where $I_1$ is a proper ideal of $R_1$ and $I_2$ is a proper ideal of $R_2$. If $\vert R_1\vert\geq 4$ and $\vert R_2\vert\geq 4$,
then the vertices of the set $\{(a_1+1, a_2), (a_1+x_1, a_2), (a_1+y_1, a_2)\}$ are adjacent to the vertices of the
set $\{(a_1, 1+a_2), (a_1, x_2+a_2), (a_1, y_2+a_2)\}$, where $x_i, y_i \in R_i\setminus \{0, 1\}$, for $i = 1, 2$ and $(a_1, a_2) \in I$. Thus $K^{3,3}$ is a subgraph of $\Gamma''_I(R)$. Therefore,  $\Gamma''_I(R)$ is not planar.

Now assume that $\vert R_1\vert \leq 3$ or $\vert R_2\vert\leq 3$. Firstly, we state the following lemma.

\begin{lem}\label{2.13}
Let $R$ be a ring as in Notation \ref{1396}, $n = 2$,  and $I=I_1 \times I_2$, where $I_1$ is a proper ideal of $R_1$ and $I_2$ is a proper ideal of $R_2$.  If $V(\Gamma''_{I_1}(R_1))$ or $V(\Gamma''_{I_2}(R_2))$ has more than one element, then $\Gamma''_I(R)$ is not planar.
\end{lem}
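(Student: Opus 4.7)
The plan is to reduce the problem to a concrete configuration and then to exhibit an explicit $K^{3,3}$ subgraph of $\Gamma''_I(R)$. The key preliminary observation is that, since $R_i$ is local with maximal ideal $\mathfrak{m}_i$ and $I_i\subseteq\mathfrak{m}_i$, one has $V(\Gamma''_{I_i}(R_i))=\mathfrak{m}_i\setminus I_i$. By symmetry I may assume $|V(\Gamma''_{I_2}(R_2))|\geq 2$. Then $\mathfrak{m}_2\neq 0$, so $R_2$ is not a field and $|R_2|\geq 4$; the standing assumption that $|R_1|\leq 3$ or $|R_2|\leq 3$ therefore forces $|R_1|\leq 3$, so $R_1$ is a field and $I_1=0$.

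Next I count to ensure enough room inside $R_2$. From $|\mathfrak{m}_2|=|V(\Gamma''_{I_2}(R_2))|+|I_2|\geq 2+1=3$ and the formula $|U(R_2)|=(|R_2/\mathfrak{m}_2|-1)\,|\mathfrak{m}_2|$ valid for the local ring $R_2$, I get $|U(R_2)|\geq 1\cdot 3=3$. I then choose three distinct units $u_1,u_2,u_3\in U(R_2)$ and three distinct elements $w_1,w_2,w_3\in\mathfrak{m}_2$.

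I will use the two parts $A=\{(0,u_j):j=1,2,3\}$ and $B=\{(1,w_k):k=1,2,3\}$ as the bipartition. Each $(0,u_j)$ is a vertex of $\Gamma''_I(R)$ because $u_j\notin I_2$ (a proper ideal contains no units) and $(0,u_j)R+I=\{0\}\times R_2\neq R$; each $(1,w_k)$ is a vertex because $(1,w_k)R+I=R_1\times(w_kR_2+I_2)\subseteq R_1\times\mathfrak{m}_2\neq R$. For each pair $j,k$, adjacency follows at once: $(1,w_k)\in(0,u_j)R+I=\{0\}\times R_2$ would force $1=0$, while $(0,u_j)\in(1,w_k)R+I=R_1\times(w_kR_2+I_2)$ would force the unit $u_j$ to lie in $\mathfrak{m}_2$. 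Hence $A\cup B$ carries a copy of $K^{3,3}$, and Kuratowski's theorem gives that $\Gamma''_I(R)$ is not planar.

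The only point that requires genuine care is the combinatorial lower bound $|U(R_2)|\geq 3$; once the local-ring structure of $R_2$ is exploited to obtain $|\mathfrak{m}_2|\geq 3$, everything else is a direct verification using the definition of adjacency in $\Gamma''_I(R)$ (and could alternatively be recovered piecemeal from Lemma \ref{2.5} together with Lemma \ref{2.3}).
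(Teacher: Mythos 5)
Your proof is correct and takes essentially the same route as the paper: both exhibit an explicit $K^{3,3}$ whose two parts consist of pairs with a unit in one coordinate and an element of the maximal ideal (or of $I$) in the other; the paper simply names its three units as $1, 1+x_1, 1+x_2$ and its three elements of $\mathfrak{m}_2$ as $0, x_1, x_2$ (shifted by an arbitrary $(a_1,a_2)\in I$), whereas you obtain them by the counting bounds $\vert\mathfrak{m}_2\vert\geq 3$ and $\vert U(R_2)\vert\geq 3$. One small remark: your appeal to the ``standing assumption'' $\vert R_1\vert\leq 3$ (to force $I_1=0$) is not among the lemma's hypotheses, but it is also unnecessary, since in general $(0,u_j)R+I=I_1\times R_2$ and $1\notin I_1$ yields the same contradiction, so your argument goes through verbatim for any proper $I_1$.
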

\begin{proof}
Without loss of generality, we may assume that $V(\Gamma''_{I_2}(R_2))$ has more than one element. Let $x_1, x_2 \in V(\Gamma''_{I_2}(R_2))$ and $(a_1, a_2) \in I$. Set $u_1 := 1 + x_1$ and $u_2 := 1 + x_2$. Then, since $R_2$ is local, we have that
$u_1, u_2 \in U(R_2)$. Now the vertices of the set $\{(a_1+1, a_2), (a_1+1, a_2+x_1), (a_1+1, a_2+x_2)\}$ are adjacent to
the vertices of the set $\{(a_1, a_2+u_1), (a_1, a_2+u_2), (a_1, 1+a_2)\}$, and so $K^{3,3}$ is a subgraph of $\Gamma''_I(R)$.
Therefore $\Gamma''_I(R)$ is not planar by Kuratowski's Theorem.
\end{proof}

Now, we have the following theorem which characterize all finite non-local rings
with the graph $\Gamma''_I(R)$ is planar.
\begin{thm}\label{2.14}
Let $R$ be a finite non-local ring with $n$ elements and $I=I_1\times I_2\times \cdots \times I_n$ be an ideal of $R$, where $I_i$ is a proper ideal of $R_i$ for $i=1,2, \ldots, n$. Then $\Gamma''_I(R)$ is planar if and only if $R$ is
one of the following rings:
$$
\Bbb Z_2 \times  \Bbb Z_2 \times  \Bbb Z_2,
$$
$$
\Bbb Z_2 \times  F, \   \   \  \Bbb Z_2 \times  \Bbb Z_4, \   \   \ \Bbb Z_2 \times \frac{\Bbb Z_2[X]}{(X^2)\Bbb Z_2[X]},
$$
$$
\Bbb Z_3 \times  F, \   \   \ \Bbb Z_3 \times  \Bbb Z_4, \   \   \ \Bbb Z_3 \times \frac{\Bbb Z_2[X]}{(X^2)\Bbb Z_2[X]},
$$
where $F$ is a finite field.
\end{thm}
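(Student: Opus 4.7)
The plan is to argue both directions. Decomposing $R \cong R_1 \times \cdots \times R_n$ via Notation \ref{1396}, with non-locality giving $n \geq 2$, Proposition \ref{2.10} immediately forces $n \leq 3$, and Corollary \ref{2} handles the case $n = 3$ (giving $R \cong \Bbb Z_2 \times \Bbb Z_2 \times \Bbb Z_2$). Thus the main work is the case $n = 2$.

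For $n = 2$, the $K^{3,3}$ construction in the paragraph preceding Lemma \ref{2.13} rules out the possibility of both $|R_1|, |R_2| \geq 4$. So, after relabeling, $|R_1| \leq 3$, which forces $R_1 \in \{\Bbb Z_2, \Bbb Z_3\}$ since $R_1$ is finite local. Lemma \ref{2.13} then forces $|V(\Gamma''_{I_2}(R_2))| \leq 1$, and since $R_2$ is local with maximal ideal $m_2$, this vertex set is exactly $m_2 \setminus I_2$. So either $|m_2 \setminus I_2| = 1$ or $I_2 = m_2$.

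In the first possibility, a counting argument on the abelian group $m_2/I_2$ forces $I_2 = 0$ and $|m_2| = 2$. Writing $m_2 = \{0, x\}$ and using that $x^2 \in m_2$ but $x$ is a non-unit in the local ring $R_2$, one gets $x^2 = 0$; since $m_2$ is then an $R_2/m_2$-vector space of size $2$, the residue field is $\Bbb Z_2$ and $|R_2| = 4$, leaving the two options $R_2 \in \{\Bbb Z_4, \Bbb Z_2[X]/(X^2)\}$. In the second possibility $I_2 = m_2$, the crossing vertices from Lemma \ref{2.5} and Remark \ref{1978} yield a complete bipartite subgraph between $\{(0, u) : u \in R_2 \setminus m_2\}$ and $\{(a, w) : a \in R_1 \setminus \{0\}, w \in m_2\}$; this contains $K^{3,3}$ whenever $|m_2| \geq 3$. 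Hence either $m_2 = 0$ (so $R_2$ is a finite field $F$) or $|m_2| = 2$ (giving again $R_2 \in \{\Bbb Z_4, \Bbb Z_2[X]/(X^2)\}$). Combining with $R_1 \in \{\Bbb Z_2, \Bbb Z_3\}$, the product $R_1 \times R_2$ lies in the stated list.

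For the converse, I would exhibit planar drawings of $\Gamma''_I(R)$ for each ring on the list and each valid proper ideal $I = I_1 \times I_2$. In every case the vertex set is small enough to enumerate explicitly from the definition, and direct computation of adjacencies typically reduces to a $K^{m,n}$ with $\min(m,n) \leq 2$, which is planar. I expect the main obstacle to be this case-by-case verification on the sufficiency side --- in particular for $\Bbb Z_3 \times \Bbb Z_4$ and $\Bbb Z_3 \times \Bbb Z_2[X]/(X^2)$ with $I_2 = m_2$, where the graph has the largest vertex count and one must carefully confirm the absence of any $K^5$ or $K^{3,3}$ subdivision.
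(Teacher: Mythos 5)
Your necessity argument breaks at the step ``the $K^{3,3}$ construction in the paragraph preceding Lemma \ref{2.13} rules out both $|R_1|,|R_2|\geq 4$.'' That construction silently requires the elements $1,x_1,y_1$ to be three \emph{distinct} elements of $R_1\setminus I_1$ (and likewise in $R_2$): if $x_1\in I_1$ then $(a_1+x_1,a_2)\in (a_1,x_2+a_2)(R_1\times R_2)+I$, so the claimed adjacency fails. When $I_1=\mathfrak{m}_1$ and $|R_1|=4$ the set $R_1\setminus I_1$ has only two elements, so no such choice exists and the case $|R_1|,|R_2|\geq 4$ is not eliminated; note that Lemma \ref{2.13} is also silent here, since $V(\Gamma''_{\mathfrak{m}_1}(R_1))=\mathfrak{m}_1\setminus\mathfrak{m}_1=\emptyset$.

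This is not a repairable technicality, because the ``only if'' direction is actually false for a single admissible $I$ in exactly this regime. Take $R=\Bbb Z_4\times \Bbb F_5\cong \Bbb Z_{20}$ and $I=2\Bbb Z_4\times 0$, i.e.\ $I=\{0,10\}\subseteq\Bbb Z_{20}$. The vertex set is $\{2,4,6,8,12,14,16,18\}\cup\{5,15\}$; any two even vertices lie in each other's ideal $2\Bbb Z_{20}$ and $5,15$ lie in each other's ideal $5\Bbb Z_{20}$, while every even vertex is adjacent to both $5$ and $15$. Hence $\Gamma''_I(R)\cong K^{2,8}$ is planar, although $\Bbb Z_4\times\Bbb F_5$ is not on the list. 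So the statement can only be proved in the form ``$\Gamma''_I(R)$ is planar for \emph{every} such $I$ if and only if $R$ is on the list,'' which is what the paper's own one-line proof delivers: Proposition \ref{1357} reduces that quantified statement to the planarity of $\Gamma'(R)$, which is classified in \cite[Theorem 2.5]{2222}. Your remaining structural analysis of $R_2$ (the counting argument forcing $I_2=0$, $|\mathfrak{m}_2|=2$, $x^2=0$, residue field $\Bbb Z_2$) is correct and is more self-contained than the paper's citation, but it sits downstream of the broken step. On the sufficiency side you do not need any planar drawings: each listed ring has planar $\Gamma'(R)$ by \cite[Theorem 2.5]{2222}, and $\Gamma''_I(R)$ is a subgraph of $\Gamma'(R)$ by Proposition \ref{1357}, so it is planar.
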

\begin{proof}
This follows from \cite[Theorem 2.5]{2222}, Proposition \ref{1357}, and Lemma \ref{2.13}.
\end{proof}

\begin{thm}\label{2.15}
Let $(R,\mathfrak{m})$ be a finite local ring such that $\mathfrak{m}$ is a principal ideal of $R$ and let $I$ be an ideal of $R$.
Then $\Gamma''_I(R)$ is planar.
\end{thm}
\begin{proof}
This follows from \cite[Theorem 2.6]{2222} and Proposition \ref{1357}.
\end{proof}

For an ideal $I$ of a finite commutative ring $R$ with identity, ara$(I)$ is the smallest of the cardinalities of minimal
generating sets of $I$.
\begin{prop}\label{2.16}
Let $(R,\mathfrak{m})$ be a finite local ring such that $\mathfrak{m}$ is not a principal ideal of $R$ and let $I$ be an ideal of $R$.
If $\Gamma''_I(R)$ is planar, then ara$(\mathfrak{m}/I) \leq 4$.
\end{prop}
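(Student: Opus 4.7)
The plan is to prove the contrapositive: assume $ara(\mathfrak{m}/I) \geq 5$ and exhibit a $K^{5}$ inside $\Gamma''_I(R)$, contradicting planarity by Kuratowski's Theorem. The hypothesis that $\mathfrak{m}$ is not principal plays no substantive role in the argument; it only ensures the conclusion is not vacuous, since if $\mathfrak{m}$ were principal then $\mathfrak{m}/I$ would be cyclic and $ara(\mathfrak{m}/I)\le 1$ automatically.

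First I would translate $ara(\mathfrak{m}/I)$ into a vector-space dimension via Nakayama's lemma. Because $R/I$ is a finite local ring with maximal ideal $\mathfrak{m}/I$ and residue field $k := R/\mathfrak{m}$,
\[
ara(\mathfrak{m}/I) \;=\; \dim_{k}\bigl((\mathfrak{m}/I)\big/(\mathfrak{m}/I)^{2}\bigr) \;=\; \dim_{k}\bigl(\mathfrak{m}/(\mathfrak{m}^{2}+I)\bigr),
\]
so the assumption furnishes $x_1,\dots ,x_5\in\mathfrak{m}$ whose classes $\bar x_i$ in $V := \mathfrak{m}/(\mathfrak{m}^{2}+I)$ are $k$-linearly independent. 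Linear independence forces each $\bar x_i\ne 0$, hence $x_i\notin I$, and also forces the $x_i$ to be pairwise distinct. Since $R$ is local and $x_i\in\mathfrak{m}$, we have $x_iR+I\subseteq\mathfrak{m}\ne R$, so each $x_i$ is a vertex of $\Gamma''_I(R)$.

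The main step is verifying that $\{x_1,\dots ,x_5\}$ induces a clique. Fix $i\ne j$ and suppose, for contradiction, that $x_i\in x_jR+I$, say $x_i=rx_j+a$ with $r\in R$, $a\in I$. Since $R$ is local, either $r\in\mathfrak{m}$ or $r$ is a unit. If $r\in\mathfrak{m}$, then $rx_j\in\mathfrak{m}^{2}$, hence $x_i\in\mathfrak{m}^{2}+I$, contradicting $\bar x_i\ne 0$. If $r$ is a unit, reducing the relation modulo $\mathfrak{m}^{2}+I$ yields $\bar x_i=\bar r\,\bar x_j$ in $V$ with $\bar r\in k^{\times}$, contradicting the linear independence of $\bar x_i$ and $\bar x_j$. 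By symmetry, $x_j\notin x_iR+I$ as well, so $x_i$ and $x_j$ are adjacent in $\Gamma''_I(R)$.

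Consequently $\{x_1,\dots ,x_5\}$ spans a $K^{5}$ subgraph of $\Gamma''_I(R)$, which is therefore non-planar by Kuratowski's Theorem — the desired contradiction. I do not expect any genuine difficulty: the quotient $\mathfrak{m}/(\mathfrak{m}^{2}+I)$ is calibrated precisely to detect when a relation of the form $x_i-rx_j\in I$ can hold, and the local-ring dichotomy ``$r$ is a unit or $r\in\mathfrak{m}$'' disposes of both cases cleanly.
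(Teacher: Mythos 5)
Your proposal is correct and follows essentially the same route as the paper: assume $ara(\mathfrak{m}/I)\geq 5$, pick five elements from a minimal generating set of $\mathfrak{m}/I$, and exhibit them as a $K^5$ inside $\Gamma''_I(R)$. The only difference is that the paper simply asserts that these five vertices form a clique, whereas you actually justify the adjacency via the identification $ara(\mathfrak{m}/I)=\dim_{k}\bigl(\mathfrak{m}/(\mathfrak{m}^{2}+I)\bigr)$ and the unit-or-in-$\mathfrak{m}$ dichotomy, so your write-up supplies a verification the paper leaves implicit.
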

\begin{proof}
Assume to the contrary that each minimal generating set of $\mathfrak{m}/I$ has more
than four elements, and that $x_1+I, \dots , x_5 +I$ belong to a minimal generating set of $\mathfrak{m}/I$.
Then a subgraph of $\Gamma''_I(R)$ with vertex-set $\{a+x_1, a+x_2, a+x_3,  a+x_4,  a+x_5\}$ is isomorphic to $K^5$ for each $a \in I$. Thus by Kuratowski's Theorem,  $\Gamma''_I(R)$ is not planar, which is a contradiction.
\end{proof}

An undirected graph is said to be an \textit{outerplanar graph} if it can be drawn in the plane
without crossings in such a way that all of the vertices belong to the unbounded face
of the drawing. There is a characterization for outerplanar graphs that says a graph
is outerplanar if and only if it does not contain a subdivision of $K^4$ or $K^{2,3}$.
For more information see \cite{1979}.

\begin{prop}\label{1359}
$\Gamma'(R)$ is an outerplanar graph if and only if $\Gamma''_I(R)$ is an outerplanar graph for each ideal $I$ of $R$.
\end{prop}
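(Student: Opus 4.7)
The plan is to mirror the proof of Proposition \ref{1357} with ``planar'' replaced by ``outerplanar'' throughout. The two key ingredients are identical: first, that $\Gamma''_{0}(R) = \Gamma'(R)$, since setting $I = 0$ reduces the vertex set $\{x \in R \setminus I : xR + I \neq R\}$ to $W^*(R)$ and reduces the adjacency conditions $x \notin yR + I$, $y \notin xR + I$ to $x \notin yR$, $y \notin xR$; and second, that by \cite[Corollary 2.14]{FF5726}, $\Gamma''_I(R)$ is a subgraph of $\Gamma'(R)$ for every ideal $I$ of $R$.

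For the ``only if'' direction, I would argue that outerplanarity is hereditary under taking subgraphs. This is immediate from the forbidden-subgraph characterization recalled just before the proposition: a graph is outerplanar if and only if it does not contain a subdivision of $K^4$ or $K^{2,3}$. Since a subgraph of a graph avoiding these forbidden subdivisions clearly avoids them as well, any subgraph of an outerplanar graph is outerplanar. Applying this to $\Gamma''_I(R) \subseteq \Gamma'(R)$ finishes this direction.

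For the ``if'' direction, if $\Gamma''_I(R)$ is outerplanar for every ideal $I$ of $R$, then in particular, choosing $I = 0$ gives that $\Gamma'(R) = \Gamma''_{0}(R)$ is outerplanar.

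No step constitutes a genuine obstacle: the argument is essentially a one-line reference to \cite[Corollary 2.14]{FF5726} combined with the hereditary nature of outerplanarity under the forbidden-subdivision characterization. The only care needed is to note explicitly that outerplanarity, like planarity, is subgraph-closed, so that the subgraph containment result can be invoked in the same way as in Proposition \ref{1357}.
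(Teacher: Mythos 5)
Your proposal is correct and is exactly the argument the paper intends: the paper's proof simply says ``similar to the proof of Proposition \ref{1357},'' and you have spelled out that similarity, correctly noting that outerplanarity is subgraph-closed via the $K^4$/$K^{2,3}$ subdivision characterization and that $\Gamma''_I(R)$ is a subgraph of $\Gamma'(R)$ with $\Gamma''_0(R)=\Gamma'(R)$.
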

\begin{proof}
This is similar to the proof of Proposition \ref{1357}.
\end{proof}

\begin{prop}\label{1360}
Let $(R,\mathfrak{m})$ be a finite local ring such that $\mathfrak{m}/I$ is not a principal ideal of $R$, where $I$ is an ideal of $R$.
If in a minimal generating set of $\mathfrak{m}/I$ there exist distinct elements $x+I$ and $y+I$ with
$\vert U(R)x\vert \geq 3$, $\vert U(R)y\vert \geq 3$, then $\Gamma''_I(R)$ is not planar.
\end{prop}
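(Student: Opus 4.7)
The plan is to construct an explicit $K^{3,3}$ subgraph of $\Gamma''_I(R)$ whose two parts are subsets of the unit orbits $U(R)x$ and $U(R)y$, and then invoke Kuratowski's Theorem. The first and most delicate step is to translate minimality of the generating set into the arithmetic statement $x\notin Ry+I$ and $y\notin Rx+I$: if, say, $x\in Ry+I$, then $x+I\in R(y+I)$, so the element $x+I$ could be dropped from the minimal generating set of $\mathfrak{m}/I$ without loss, contradicting minimality. Since $R$ is local and $I$ is a proper ideal one has $I\subseteq \mathfrak{m}$, so $xR+I\subseteq \mathfrak{m}\subsetneq R$ and similarly for $y$; combined with $x,y\notin I$ (they are nonzero modulo $I$), this places $x$ and $y$ in $V(\Gamma''_I(R))$.

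Using the hypotheses $|U(R)x|\geq 3$ and $|U(R)y|\geq 3$, choose distinct elements $u_1x, u_2x, u_3x$ and $v_1y, v_2y, v_3y$. For each $i$, $u_ix\notin I$ (else $x=u_i^{-1}(u_ix)\in I$) and $(u_ix)R+I=xR+I\subsetneq R$, so $u_ix$ is a vertex of $\Gamma''_I(R)$, and similarly for each $v_jy$. These six vertices are pairwise distinct: the $u_ix$'s are distinct by choice, the $v_jy$'s likewise, and no $u_ix$ can equal any $v_jy$, for $u_ix=v_jy$ would give $x=u_i^{-1}v_jy\in Ry+I$, contradicting the key non-containment.

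Finally, verify adjacency between $u_ix$ and $v_jy$ for each $i,j$. If $u_ix\in (v_jy)R+I$, write $u_ix=v_jyr+a$ with $r\in R$ and $a\in I$; multiplying by $u_i^{-1}$ and using $u_i^{-1}I\subseteq I$ yields $x\in Ry+I$, contradicting the setup. By the symmetric argument $v_jy\notin (u_ix)R+I$, so $u_ix$ is adjacent to $v_jy$. All nine edges of a $K^{3,3}$ are therefore present, and $\Gamma''_I(R)$ is non-planar by Kuratowski's Theorem. The main obstacle lies in the very first step---correctly extracting $x\notin Ry+I$ and $y\notin Rx+I$ from membership in a minimal generating set of $\mathfrak{m}/I$; once this is in place, the rest is routine bookkeeping exploiting that multiplication by a unit leaves $I$, $Rx$, and $Ry$ invariant.
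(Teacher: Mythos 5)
Your proof is correct and follows essentially the same route as the paper: choosing three distinct elements from each of the unit orbits $U(R)x$ and $U(R)y$ and exhibiting a $K^{3,3}$. The paper leaves the key non-containment $x\notin Ry+I$, $y\notin Rx+I$ and the adjacency check as ``easy to see'' (and contains a typo, listing $\{u_1x,u_2x,u_3x\}$ twice instead of $\{v_1y,v_2y,v_3y\}$), whereas you supply those details explicitly via the minimality of the generating set.
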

\begin{proof}
Suppose that in a minimal generating set of $\mathfrak{m}/I$ there exist distinct elements $x+I$ and $y+I$ with
$\vert U(R)x\vert \geq 3$, $\vert U(R)y\vert \geq 3$. Then there are distinct elements $u_1x, u_2x, u_3x\in U(R)x$ and $v_1y, v_2y, v_3y\in U(R)y$. Now, it is easy to see that the vertices of the
set $\{u_1x, u_2x, u_3x\}$ are adjacent to the vertices of the set $\{u_1x, u_2x, u_3x\}$. Thus $K^{3,3}$
is a subgraph of $\Gamma''_I(R)$. Therefore, $\Gamma''_I(R)$ is not planar by Kuratowski's Theorem.
\end{proof}

\begin{thm}\label{1000}
Let $R = R_1 \times R_2$ such that $R_1$ and $R_2$ are arbitrary
local rings and $I=I_1\times I_2$, where $I_1\not=R_1$ is an ideal of $R_1$ and $I_2\not=R_2$ is an ideal of $R_2$.
If there exist elements $x$ and $y$ in $V(\Gamma''_{I_1}(R_1))$ (or $V(\Gamma''_{I_2}(R_2))$) such that $x$ is
adjacent to $y$, then $\Gamma''_I(R)$ is not planar.
\end{thm}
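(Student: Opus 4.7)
Without loss of generality suppose the adjacent pair $x-y$ lies in $\Gamma''_{I_1}(R_1)$ (the other case is symmetric). My plan is to use Lemma \ref{2.1} to amplify this single edge into a large complete bipartite subgraph of $\Gamma''_I(R)$, via translation by elements of $I_1$ together with arbitrary choices of second coordinate in $R_2$.

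The key observation is that for every $a,b\in I_1$, the translated elements $x+a$ and $y+b$ are again adjacent vertices of $\Gamma''_{I_1}(R_1)$: they remain vertices because $(x+a)R_1+I_1=xR_1+I_1\neq R_1$, and $x+a\in (y+b)R_1+I_1=yR_1+I_1$ would force $x\in yR_1+I_1$, contradicting adjacency. Applying Lemma \ref{2.1}, every vertex $(x+a,s)$ is adjacent in $\Gamma''_I(R)$ to every $(y+b,t)$ as $(a,b,s,t)$ ranges over $I_1\times I_1\times R_2\times R_2$. The two families $\{(x+a,s):(a,s)\in I_1\times R_2\}$ and $\{(y+b,t):(b,t)\in I_1\times R_2\}$ are disjoint (otherwise $x-y\in I_1\subseteq yR_1+I_1$), and within each family the vertices are pairwise distinct, so $\Gamma''_I(R)$ contains $K_{|I_1|\cdot|R_2|,\,|I_1|\cdot|R_2|}$ as a subgraph.

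If $|I_1|\cdot|R_2|\geq 3$ this already contains $K^{3,3}$ and Kuratowski's Theorem finishes the proof. The only remaining case is $|I_1|\cdot|R_2|=2$, which (using $|I_1|\geq 1$ and $|R_2|\geq 2$) forces $I_1=\{0\}$, $R_2\cong\Bbb Z_2$, and (since $I_2\neq R_2$) $I_2=\{0\}$, so $I=0$ and $\Gamma''_I(R)=\Gamma'(R_1\times\Bbb Z_2)$. I would treat this boundary case by a direct construction: for every $u\in U(R_1)$ and every $m\in\mathfrak{m}_1$ the vertices $(u,0)$ and $(m,1)$ are adjacent in $\Gamma'(R_1\times\Bbb Z_2)$, since $u\notin mR_1\subseteq\mathfrak{m}_1$ and $1\neq 0$ in $\Bbb Z_2$. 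This yields a complete bipartite subgraph $K_{|U(R_1)|,\,|\mathfrak{m}_1|}$. The hypothesis supplies four distinct elements $0,x,y,x+y\in\mathfrak{m}_1$ (with $x+y\neq 0$, since $x+y=0$ would give $y=-x\in xR_1$, contradicting adjacency), so $|\mathfrak{m}_1|\geq 4$; and because every nonzero additive coset of $\mathfrak{m}_1$ in the local ring $R_1$ consists entirely of units and has $|\mathfrak{m}_1|$ elements, one also has $|U(R_1)|\geq|\mathfrak{m}_1|\geq 4$. Both parts of the bipartite subgraph then have at least three vertices, so $K^{3,3}$ appears and we are done by Kuratowski.

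The main obstacle is precisely the boundary case $I=0$, $R_2\cong\Bbb Z_2$: the translation-by-$I_1$ trick only delivers $K_{2,2}$ there, so one must fall back on the separate bipartite structure between $U(R_1)\times\{0\}$ and $\mathfrak{m}_1\times\{1\}$ in $\Gamma'(R_1\times\Bbb Z_2)$, and leverage the mere existence of one adjacent pair in $\Gamma''_{I_1}(R_1)$ to guarantee that both $|\mathfrak{m}_1|$ and $|U(R_1)|$ are at least three.
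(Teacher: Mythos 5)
Your proof is correct, but it takes a genuinely different route from the paper's. The paper fixes $(a_1,a_2)\in I$, sets $z=1+x$ (a unit, since the relevant factor is local), and exhibits one explicit seven-vertex subdivision of $K^{3,3}$ inside $\Gamma''_I(R)$ drawn from the elements $a_i$, $1+a_i$, $x+a_2$, $y+a_2$, $z+a_2$ --- a single uniform configuration with no case analysis. You instead amplify the one given edge: translating by $I_1$ preserves both vertexhood and adjacency (since $(x+a)R_1+I_1=xR_1+I_1$), and Lemma \ref{2.1} lets the second coordinate run over all of $R_2$, producing $K_{|I_1||R_2|,\,|I_1||R_2|}$ outright; your disjointness check (via $x-y\notin I_1\subseteq yR_1+I_1$) is the right one. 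This buys a much stronger structural conclusion and, notably, does not use locality of either factor in the main case --- but it fails exactly when $|I_1||R_2|=2$, forcing the separate analysis of $\Gamma'(R_1\times\Bbb Z_2)$, where you correctly extract $|\mathfrak m_1|\geq 4$ from $\{0,x,y,x+y\}$ and $|U(R_1)|\geq|\mathfrak m_1|$ from the coset $1+\mathfrak m_1$, and fall back on the complete bipartite graph between $U(R_1)\times\{0\}$ and $\mathfrak m_1\times\{1\}$. So: the paper's proof is shorter and case-free but rests entirely on verifying one hand-drawn configuration; yours is longer but more transparent, yields larger complete bipartite subgraphs, and isolates precisely where locality is actually needed.
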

\begin{proof}
Without loss of generality, suppose that $x,y \in V(\Gamma''_{I_2}(R_2))$ and $x$ is adjacent
to $y$. Set $z := 1 + x$. Clearly $z\in U(R_2)$. Let $a_1\in I_1\not =R_1$ and $a_2 \in I_2\not=R_2$.
Now, we have the following subdivision of
$K^{3,3}$ in $\Gamma''_I(R)$. Therefore, $\Gamma''_I(R)$ is not planar by Kuratowski's Theorem.
 \begin{figure}[H]
						\begin{center}
				\begin{tikzpicture}	[scale=1.6]		
					\begin{scope}[shift={(-1,0)}]
						\draw [fill=black] (0,0) circle (0.05);										
						\draw [fill=black] (1,1) circle (0.05);
						\draw [fill=black] (-1,2) circle (0.05);
						\draw [fill=black] (0,2) circle (0.05);
						\draw [fill=black] (1,2) circle (0.05);
						\draw [fill=black] (1,0) circle (0.05);
						\draw [fill=black] (-1,0) circle (0.05);
									
						\draw (1,1) -- (1,0) ;
						\draw (0,0) -- (1,2) ;
						\draw (1,2) -- (1,1) ;
						\draw (1,2) -- (-1,0) ;
						\draw (0,0) -- (-1,2) ;
						\draw (0,2) -- (1,0) ;
						\draw (1,0) -- (0,2) ;
						\draw (0,0) -- (-0,2) ;
						\draw (-1,0) -- (-1,2) ;
						\draw (-1,2) -- (1,0) ;
					
						\node at (0,-0.3) {\tiny $(1+a_1, y+a_2)$};
						\node at (-1.6,2) {\tiny $(a_1,1+a_2)$};
						\node at (1.6,1) {\tiny $(a_1, y+a_2)$};
						\node at (1.6,2) {\tiny $(a_1, x+a_2)$};
						\node at (0.1,2.2) {\tiny $(a_1,z+a_2)$};	
						\node at (1.6,-0.3) {\tiny $(a_1+1, x+a_2)$};
						\node at (-1.6,-0.3) {\tiny $(1+a_1, a_2)$};
					\end{scope}						
				\end{tikzpicture}
						\end{center}
		\end{figure}
	\end{proof}
	
{\bf Acknowledgement.}
The author would like to thank the referee for his/her helpful comments.

\bibliographystyle{amsplain}

\end{document}